\newtheorem{theorem}{Theorem}[section]
\newtheorem{corollary}[theorem]{Corollary}
\newtheorem{question}{Question}
\newtheorem{lemma}[theorem]{Lemma}
 \title{A characterization of claw-free CIS graphs\\ and new results on the order of CIS graphs}
\author[1]{Liliana Alc\'on\thanks{liliana@mate.unlp.edu.ar}}
\author[1]{Marisa Gutierrez\thanks{marisa@mate.unlp.edu.ar}}
\author[2]{Martin Milani\v{c}\thanks{martin.milanic@upr.si}}
\affil[1]{\normalsize CeMaLP,  Universidad Nacional de La Plata. CONICET, Argentina}
\affil[2]{\normalsize University of Primorska, IAM and FAMNIT, Koper, Slovenia}
\date{\today}
\begin{document}
\maketitle

\begin{abstract}
A graph is CIS if every maximal clique interesects every maximal stable set. Currently, no good characterization or recognition algorithm for the CIS graphs is known. We characterize graphs in which every maximal matching saturates all vertices of degree at least two and use this result to give a structural, efficiently testable characterization of claw-free CIS graphs. We answer in the negative a question of Dobson, Hujdurovi\'c, Milani{\v c}, and Verret [{\it Vertex-transitive CIS graphs}, European J.~Combin.~44 (2015) 87--98] asking whether the number of vertices of every CIS graph is bounded from above by the product of its clique and stability numbers. On the positive side, we show that the question of Dobson et al.~has an affirmative answer in the case of claw-free graphs.
\end{abstract}

\section{Introduction}

Many graph classes can be defined in terms of properties of cliques or stable sets in a graph (see, e.g.,~\cite{MR3575013,HMR2018}).
In this paper we continue the investigation of \emph{CIS graphs}, defined as
graphs in which every maximal clique intersects every maximal stable set. Here, `maximality' refers to maximality under inclusion.
CIS graphs were studied in a series of papers~\cite{MR1344757,AndBorGur,MR3141630,MR2553393,MR2064873,MR3278773,MR3575013,MR2657704,MR2575826}
under different names; the name CIS (Cliques Intersect Stable sets) was suggested by Andrade et
al.~\cite{AndBorGur}. Currently, no good characterization or recognition algorithm for the CIS graphs is known.
 Recognizing CIS graphs was believed to be co-NP-complete~\cite{MR1344757}, conjectured to be co-NP-complete~\cite{MR2234986}, and conjectured
to be polynomial~\cite{AndBorGur}. The difficulty of understanding the structure of CIS graphs is perhaps related to the fact that the class of CIS graphs is not closed under vertex deletion. For example, the \emph{bull}, that is, is the graph with vertex set $\{v_1,\ldots, v_5\}$ and edge set $\{v_1v_2$, $v_2v_3$, $v_3v_4$, $v_2v_5$, $v_3v_5\}$, is a CIS graph, while deleting vertex $v_5$ from the bull yields the $4$-vertex path, which is not CIS.

Some partial results are known regarding the CIS property in particular graph classes. The class of CIS graphs generalizes the class of $P_4$-free graphs, also known as \emph{cographs}~\cite{MR619603,MR3141630}. Polynomially testable characterizations of the CIS property in the classes of planar graphs and of line graphs were given by Sun and Hu~\cite{MR2657704} and by Boros et al.~\cite{MR3575013}, respectively. Vertex-transitive CIS graphs were characterized by Dobson et al.~\cite{MR3278773} and by Hujdurovi\'c~\cite{H2018}. Furthermore, Dobson et al.~proved that vertex-transitive CIS graphs share the well-known property of perfect graphs~\cite{MR0309780} stating that the number of vertices of the graph is bounded from above by the product of its clique number and stability number. They asked whether the property holds for all CIS graphs.

A notion closely related to CIS graphs is that of a strong clique. A clique in a graph $G$ is said to be \emph{strong} if it has non-empty intersection with every maximal stable set of $G$. Thus, a graph is CIS if and only if every maximal clique is strong. A clique is  \emph{simplicial} if it consists of some vertex and all its neighbors. It is not difficult to see that every strong clique is maximal and every simplicial clique is strong.
Hujdurovi\'c et al.~\cite{HMR2018} showed that a clique in a $C_4$-free graph is strong if and only if it is simplicial, which leads to a polynomially testable characterization of CIS $C_4$-free graphs. The concept of strong clique gives rise to several other interesting graph properties studied in the literature (see, e.g.,~\cite{MR3777057,HMR2018,MR3575013,MR3623393}).

\medskip
\noindent{\bf Our results.} Our results consist of two interrelated parts.
First, we give a structural characterization of claw-free CIS graphs, by  proving a composition theorem for this class of graphs (Theorem~\ref{thm:claw-free-CIS}). This leads to a polynomial-time recognition algorithm for the CIS property in the class of claw-free graphs (Corollary~\ref{cor:recognition}). The result is derived using a characterization of graphs in which every maximal matching saturates all vertices of degree at least two (Theorem~\ref{thm:randomly-internally-matchable}), a result related to Sumner's characterization of randomly matchable graphs~\cite{MR530304} that might be of some independent interest.

\begin{sloppypar}
Second, we answer in the negative the question of Dobson et al.~\cite{MR3278773} asking whether the number of vertices of a CIS graph $G$ is necessarily bounded from above by the product of its stability  number, $\alpha(G)$, and clique number, $\omega(G)$. More precisely, using triangle-free graphs of small stability number~\cite{MR1369063}, we construct a sequence of CIS graphs showing that even the relation $|V(G)| = \mathcal{O}(\alpha(G)\omega(G))$ fails for general CIS graphs (Theorem~\ref{thm:counterexamples}). On the positive side, we show that the question of Dobson et al.~has an affirmative answer in the case of claw-free graphs (Theorem~\ref{thm:claw-free-CIS-alpha-omega}).
\end{sloppypar}

\medskip
\noindent{\bf Structure of the paper.} In Section~\ref{sec:preliminaries} we collect the necessary notations and preliminary results. In Section~\ref{sec:random-internally-mathcable} we characterize graphs in which every maximal matching saturates all vertices of degree at least two.
In Section~\ref{sec:claw-free} we prove the structural characterization of claw-free CIS graph. In Section~\ref{sec:bounds} we construct a family of counterexamples to the question of Dobson et al.~and study the question in the case of claw-free graphs. We conclude the paper in Section~\ref{sec:EH} by posing a question left open by our work, namely whether the Erd\H{o}s-Hajnal property holds for the class of CIS graphs.

\section{Preliminaries}\label{sec:preliminaries}

We consider finite, undirected, and non-null graphs only.
Unless specified otherwise by using the term \textit{multigraph},
 all our graphs will be simple, that is, without loops or multiple edges.
  A graph $G = (V,E)$  has vertex set $V(G)=V$ and edge set $E(G)=E$. The \emph{order} of
  $G$ is   $|V|$. Given $S\subseteq V(G)$, the \emph{subgraph induced by $S$} in $G$ is denoted by $G[S]$ and defined as the graph
  with vertex set $S$ and edge set $\{\{x,y\}\mid \{x,y\}\in E; x,y\in S\}$.
 The {\emph complement} ${\overline{G}}$  of  a graph $G = (V,E)$ is the graph with  vertex-set  $V(\overline G)=V $  and the edge-set $E(\overline G) = \{\{x,y\}\mid x,y\in V,~x\neq~y, \textrm{ and } \{x,y\}\not\in E\}$.
  We say that  $G$ is \emph{co-connected} if its complement is connected. A \emph{co-component} of $G$ is the subgraph of $G$ induced by the vertex set of a (connected) component of $\overline{G}$. The \emph{neighborhood} of a vertex $v\in V(G)$ is the set $N_G(v)$
of vertices adjacent to  $v$; its \emph{closed neighborhood} is the set
$N_G[v]$, defined as $N_G[v] = N_G(v)\cup\{v\}$. The cardinality of $N_G(v)$ is the \emph{degree} of $v$,
denoted by $d_G(v)$. A \emph{universal vertex} in a graph $G$ is a vertex of degree $|V(G)|-1$. We denote by
 $\delta(G)$ the minimum degree of a vertex in $G$.  For a set $S\subseteq V(G)$, we let $N_G(S)$ be the set of
  all vertices not in $S$ having a neighbor in $S$.

As usual, we denote the $n$-vertex complete graph, path graph, and cycle graph by $K_n$, $P_n$, and $C_n$, respectively. The graph $K_3$ will be also referred to as the \emph{triangle}. By $K_{m,n}$ we denote the complete bipartite graph with parts of the bipartition of sizes $m$ and $n$. The {\it claw} is the complete bipartite graph $K_{1,3}$. The fact that a graph $G$ is isomorphic to a graph $H$ will be denoted by $G\cong H$. We say that $G$ is \emph{$H$-free} if no induced subgraph of $G$ is isomorphic to $H$.
Furthermore, given a set $\mathcal{F}$ of graphs, we say that a graph $G$ is \emph{$\mathcal{F}$-free} if $G$ is $F$-free for all $F\in \mathcal{F}$.
Given two vertex-disjoint graphs  $G$ and $H$, we denote by $G+H$ their \emph{disjoint union}, that is, the graph with vertex set $V(G)\cup V(H)$ and edge set $E(G)\cup E(H)$. For a non-negative integer $k$, we denote by $kG$ the graph consisting of $k$ disjoint copies of $G$.

A \emph{clique} in a graph is a set of  pairwise adjacent vertices; a
\emph{stable set} (or \emph{independent set}) is a set of pairwise non-adjacent vertices. We say that a clique (resp., stable set) is \emph{maximal} if it is inclusion-maximal,
that is, if it is not contained in any larger clique (resp., stable set).
Given a graph $G$, its \emph{stability number}  (or  \emph{independence number}) is denoted by $\alpha(G)$ and defined
 as the maximum size of a stable set in $G$; furthermore, its
  \emph{clique number} is denoted by $\omega(G)$ and defined as the maximum size of a clique in $G$.

A \emph{matching} in a graph $G$ is a set of pairwise disjoint edges.
Given a matching $M$ and a vertex $v$, we say that $M$ \emph{saturates} $v$ if $M$ contains an edge having $v$ as an endpoint. We will sometimes abuse this terminology and simply say that ``$v$ is in $M$'' if $M$ saturates $v$.
A matching is \emph{perfect} if it saturates all vertices of the graph.
An \emph{internal vertex} in a graph $G$ is a vertex of degree at least two. We say that a matching $M$ in a graph $G$ is a \emph{perfect internal matching} if it saturates all internal vertices of $G$, that is, if every vertex not in $M$ is of degree at most $1$. Perfect internal matchings were studied in a series of papers, see, e.g.,~\cite{Bar_Gombas,Bar_Mik_ARS,Bar_Mik_IPL,MR3488933}.

For undefined graph terminology and notation, we refer to~\cite{MR1367739}.

\subsection{Preliminaries on line graphs of multigraphs}\label{subsec:prelim-line}

Given a multigraph $H$, its \emph{line graph} is the simple graph $L(H)$ with vertex set $E(H)$ in which two distinct vertices $e$ and $e'$ are adjacent if and only if $e$ and $e'$ have a common endpoint as edges in $H$. Clearly, if $G$ is the line graph of a multigraph $H$, then there exists a multigraph $H'$ without loops such that $G$ is (isomorphic to) the line graph of $H'$. Such a multigraph $H'$ can be obtained from $H$ by replacing every loop in $H$ joining $v$ with itself with a pendant edge joining $v$ with a new vertex. Hence, we may assume without loss of generality that all line graphs considered are line graphs of loopless multigraphs.
Given a loopless multigraph $H$ and an edge $e\in E(H)$, let us denote by $w(e)$ the \emph{multiplicity} of $e$ in $H$, that is, the number of edges of $H$ with the same endpoints as $e$. Using this multiplicity function, multigraph $H$ can be equivalently represented with any (simple) graph $\tilde{H}$ with vertex set $V(H)$ obtained from $H$ by keeping only one representative edge from each class of multiple edges, together with the restriction of the multiplicity function $w$ to the edges of $\tilde{H}$.

A \emph{weighted graph} is a pair $(H,w)$ where $H = (V,E)$ is a graph and $w:E\to \mathbb{N}$ is a weight function.\footnote{We denote by $\mathbb{N}$ the set of all (strictly) positive integers.} Interpreting $w$ as the multiplicity function of the edges, we see that every weighted graph $(H,w)$ corresponds to a loopless multigraph. Accordingly, we let  $L(H,w)$ denote
the  \emph{line graph} of $(H,w)$, this is the line graph of the multigraph obtained from $H$ by replacing each edge $e\in E(H)$ with $w(e)$ parallel edges.

\subsection{Preliminaries on CIS graphs}

For $k\ge 2$, a {\it $k$-comb} is a graph $F_k$ with $2k$ vertices
$v_1,\ldots, v_k,w_1,\ldots, w_k$ such that $C = \{v_1,\ldots, v_k\}$ is a clique, $v_i$ is adjacent to $w_i$ for all $i\in \{1,\ldots, k\}$, and there are no other edges. In particular, $S = \{w_1,\ldots, w_k\}$ is a stable set, which shows that $F_k$ is a split graph with a unique split partition $(C,S)$; moreover, $F_k$ is not a CIS graph since $(C,S)$ is a  disjoint pair of a maximal clique and maximal stable set. A {\it $k$-anticomb} is the graph $\overline{F_k}$, the complement of a $k$-comb.

An induced $k$-comb $F_k$ in a graph $G$ is said to be {\it settled} if there exists a vertex $v\in V(G)\setminus V(F_k)$ that is adjacent to every vertex of $C$ and non-adjacent to every vertex of $S$, where $(C,S)$ is the unique split partition of $F_k$. Similarly, an induced $k$-anticomb $\overline{F_k}$ in a graph $G$ with the split partition $(C,S)$ is said to be {\it settled} if there exists a vertex $v\in V(G)\setminus V(\overline{F_k})$ that is adjacent to every vertex of $C$ and non-adjacent to every vertex of $S$.
The following lemma describes a necessary (though in general not sufficient) condition for CIS graphs.

\begin{lemma}[Andrade et al.~\cite{AndBorGur}]\label{lem:CIS-settled}
If $G$ is CIS, then every $k$-comb is settled and every $k$-anticomb is settled.
\end{lemma}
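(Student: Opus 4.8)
The plan is to invoke the CIS property essentially directly. Let $F_k$ be an induced $k$-comb in $G$ with its unique split partition $(C,S)$, where $C=\{v_1,\dots,v_k\}$ is the clique, $S=\{w_1,\dots,w_k\}$ is the stable set, and the only edges between $C$ and $S$ are $v_1w_1,\dots,v_kw_k$. Since $C$ is a clique and $S$ a stable set of $G$, I would extend $C$ to a maximal clique $\widehat C$ of $G$ and $S$ to a maximal stable set $\widehat S$ of $G$. As $G$ is CIS, $\widehat C\cap\widehat S\neq\emptyset$, so fix a vertex $v\in\widehat C\cap\widehat S$. The goal is to show that $v$ witnesses that $F_k$ is settled: that $v\notin V(F_k)$, that $v$ is adjacent to every vertex of $C$, and that $v$ is non-adjacent to every vertex of $S$.

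Once $v\notin V(F_k)$ is known, the remaining two properties are immediate: $v\in\widehat C$, $C\subseteq\widehat C$, and $v\notin C$ give that $v$ is adjacent to every vertex of $C$; symmetrically $v\in\widehat S$, $S\subseteq\widehat S$, and $v\notin S$ give that $v$ is non-adjacent to every vertex of $S$. So the heart of the matter is to rule out $v\in V(F_k)=C\cup S$. If $v=v_i$ for some $i$, then from $v\in\widehat S$ and $w_i\in S\subseteq\widehat S$ we would conclude that $v_i$ and $w_i$ are non-adjacent, contradicting $v_iw_i\in E(G)$. If $v=w_i$ for some $i$, then, using $k\ge 2$, we may pick some $j\neq i$, and from $v\in\widehat C$ and $v_j\in C\subseteq\widehat C$ we would conclude that $w_i$ is adjacent to $v_j$, contradicting the fact that in $F_k$ the vertex $w_i$ is adjacent only to $v_i$. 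Hence $v\notin V(F_k)$, and $F_k$ is settled.

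For $k$-anticombs I would use the elementary fact that $G$ is CIS if and only if $\overline G$ is CIS, because the maximal cliques of a graph are exactly the maximal stable sets of its complement, and vice versa. An induced $k$-anticomb in $G$ is an induced $k$-comb in $\overline G$, and after matching the clique part of the anticomb's split partition with the clique part of that comb and exchanging adjacency in $\overline G$ with non-adjacency in $G$, the assertion that the anticomb is settled in $G$ becomes precisely the assertion that the comb is settled in $\overline G$; so this case reduces to the one already treated, applied to $\overline G$. (Alternatively, one can simply repeat the argument of the preceding paragraph with ``adjacent'' and ``non-adjacent'' interchanged.) I do not expect a genuine obstacle here: the one point that needs care is keeping the vertex $v$ produced by the CIS property outside $V(F_k)$, which is exactly where the hypothesis $k\ge 2$ is used; in the anticomb case one must, in addition, be slightly careful with the complementation bookkeeping between the two split partitions.
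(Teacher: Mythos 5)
Your proof is correct: extending the comb's clique and stable set to a maximal clique $\widehat C$ and maximal stable set $\widehat S$, observing that the CIS property forces a common vertex $v$, and using $k\ge 2$ together with inducedness to rule out $v\in V(F_k)$ is exactly the standard argument for this lemma (which the paper itself does not reprove but cites from Andrade et al.), and your reduction of the anticomb case to the comb case in $\overline G$ via the fact that $G$ is CIS if and only if $\overline G$ is CIS is also sound, with the complementation bookkeeping working out as you indicate (the anticomb's clique corresponds to the comb's stable set in $\overline G$ and vice versa). No gaps.
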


Two vertices $x,y$ in a graph $G$ are said to be \emph{true twins} if $N_G[x] = N_G[y]$. Consider the equivalence \hbox{relation $\sim$} defined on the vertex set of $G$ by the rule $x\sim y$ if and only if $x$ and $y$ are true twins. The \emph{true-twin reduction} of $G$ is the graph obtained from $G$ by contracting each equivalence class of the equivalence relation $\sim$ (which is a clique) into a single vertex. A graph is said to be \emph{true-twin-free} if it coincides with its true-twin reduction.
For later use, we recall the following useful property of CIS graphs (see, e.g.,~\cite{MR3141630,AndBorGur}).

\begin{lemma}\label{lem:components-CIS}
A graph $G$ is CIS if and only if the true-twin reduction of each component of $G$ is CIS.
\end{lemma}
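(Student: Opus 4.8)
The plan is to prove the statement via two building blocks: (i) a graph is CIS if and only if all of its connected components are CIS, and (ii) a graph is CIS if and only if its true-twin reduction is CIS. Write $G'$ for the true-twin reduction of $G$. These two facts combine cleanly once one observes that $G'$ is exactly the disjoint union of the true-twin reductions of the components of $G$: true twins are adjacent, hence lie in a common component, and within a component the relation $\sim$ agrees with the one computed in the whole graph, since neighborhoods do not reach outside the component. Consequently, ``the true-twin reduction of every component of $G$ is CIS'' says the same as ``every component of $G'$ is CIS'', which by (i) is equivalent to ``$G'$ is CIS'', which by (ii) is equivalent to ``$G$ is CIS''.

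For (i) I would use that every clique lies inside a single component, while the maximal stable sets of $G$ are precisely the sets $\bigcup_i S_i$ with $S_i$ a maximal stable set of the $i$-th component; conversely every maximal stable set of a component extends to a maximal stable set of $G$ by adjoining maximal stable sets of the other components. Hence a maximal clique $C$ contained in a component $G_j$ meets every maximal stable set of $G$ if and only if it meets every maximal stable set of $G_j$, and (i) follows.

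For (ii), let $\pi$ be the map sending each vertex of $G$ to its $\sim$-class, recall that each class is a clique, and record the key fact: if $x\sim y$ then $N_G[x]=N_G[y]$, so for every vertex $z$ outside their class, $z$ is adjacent to $x$ if and only if $z$ is adjacent to $y$; thus belonging to a clique, or being non-adjacent to a stable set, is a property of the class rather than of the chosen representative. From this I would check: every maximal clique of $G$ is a union of $\sim$-classes (if $v\in C$ and $v\sim w$ then $w$ is adjacent to $v$ and to all of $N_G(v)\supseteq C\setminus\{v\}$, so $w\in C$ by maximality); $\pi$ induces a bijection between maximal cliques of $G$ and maximal cliques of $G'$; and $\pi$ maps the maximal stable sets of $G$ onto the maximal stable sets of $G'$, every maximal stable set of $G'$ being the $\pi$-image of at least one maximal stable set of $G$ obtained by picking one representative from each of its classes. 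Granting these correspondences: if $G'$ is CIS, then a maximal clique $C$ and a maximal stable set $S$ of $G$ project to objects of the respective types in $G'$ that share a class $T$; since $T\subseteq C$ and $T\cap S\neq\emptyset$, we get $C\cap S\neq\emptyset$. Conversely, if $G$ is CIS, then a maximal clique $D$ and a maximal stable set $S'$ of $G'$ lift to a maximal clique $\bigcup_{T\in D}T$ and a maximal stable set $S$ with $\pi(S)=S'$, these share a vertex $v$, and $\pi(v)\in D\cap S'$.

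The only step requiring genuine care is the stable-set half of (ii): since distinct maximal stable sets of $G$ may have the same $\pi$-image, one must prove the implications in the correct direction, namely that the $\pi$-image of a maximal stable set of $G$ is still maximal in $G'$, and that every transversal of the classes forming a maximal stable set of $G'$ is a maximal stable set of $G$. Both reduce to the observation that two $\sim$-classes are non-adjacent in $G'$ precisely when some (equivalently, every) pair of their representatives is non-adjacent in $G$, but it is worth writing the quantifiers out explicitly to avoid slips. Everything else is a routine verification.
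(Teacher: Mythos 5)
Your proof is correct: both building blocks (CIS is determined componentwise; CIS is invariant under true-twin reduction) are established with the right correspondences between maximal cliques and maximal stable sets, and the observation that the reduction of $G$ is the disjoint union of the reductions of its components glues them together properly. Note that the paper does not prove this lemma at all---it is recalled with citations to earlier work---so there is no in-paper argument to compare with; your decomposition is the standard one underlying those references, and it is complete as written.
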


Next, we recall a characterization of CIS line graphs (of simple graphs) due to~Boros et al.~\cite{MR3575013}. The characterization relies on the following concept related to perfect internal matchings. We say that a maximal matching $M$ in a graph $H$ is \emph{absorbing} if every vertex not in $M$ sees at most one edge of $M$, or, more formally, if for every vertex $v\in V(H)$ that is not saturated by $M$, there exists an edge $e$ in $M$ such that every neighbor of $v$ is an endpoint of $e$. (In particular, this implies that $v$ is of degree at most two in $H$.)
Note that if $H$ has an edge, then every maximal matching that is a perfect internal matching is absorbing.

\begin{theorem}[Boros et al.~\cite{MR3575013}]\label{thm:line-CIS}
Let $H$ be a graph without isolated vertices and let $G = L(H)$. Then $G$ is CIS if and only if $H$ has no subgraph isomorphic to a bull and every maximal matching in $H$ is absorbing.
\end{theorem}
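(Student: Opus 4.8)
The plan is to pass to the combinatorics of $H$ through the standard line‑graph dictionary. Recall that the maximal stable sets of $L(H)$ are exactly the maximal matchings of $H$. As for cliques: if $Q$ is a clique of $L(H)$, its elements are pairwise‑intersecting edges of $H$, so either they all contain a common vertex $v$ — whence $Q\subseteq S_v:=\{e\in E(H):v\in e\}$, a \emph{star} — or, by the elementary fact that pairwise‑intersecting $2$‑element sets with no common element number exactly three and form a triangle, $Q$ is the edge set $T_{abc}:=\{ab,bc,ca\}$ of a triangle $abc$ of $H$, a \emph{triangle clique}. A direct check shows that every triangle clique is maximal; that $S_v$ is a maximal clique when $d_H(v)\ge 3$, when $d_H(v)=2$ with the two neighbours of $v$ non‑adjacent, or when $v$ lies in a $K_2$‑component; and that if $d_H(v)=2$ with adjacent neighbours $u_1,u_2$ then $S_v\subsetneq T_{vu_1u_2}$, so $S_v$ is not maximal. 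Consequently $L(H)$ is CIS if and only if every maximal matching of $H$ (i)~saturates every vertex $v$ for which $S_v$ is a maximal clique, and (ii)~meets every triangle of $H$; the theorem then amounts to showing that (i)+(ii) is equivalent to ``$H$ has no bull subgraph and every maximal matching of $H$ is absorbing''.

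For the implication ``$L(H)$ CIS $\Rightarrow$'', suppose first that some maximal matching $M$ of $H$ is not absorbing, witnessed by a vertex $v$ that is unsaturated and not absorbed. By maximality of $M$ every neighbour of $v$ is saturated, so ``$v$ is not absorbed'' means no single edge of $M$ contains $N_H(v)$; in particular $d_H(v)\ge 2$ (degree $0$ is excluded by hypothesis, and a vertex of degree $1$ is automatically absorbed). If $d_H(v)\ge 3$, or $d_H(v)=2$ with non‑adjacent neighbours, then $S_v$ is a maximal clique disjoint from $M$; if $d_H(v)=2$ with adjacent neighbours $u_1,u_2$, then $u_1u_2\notin M$, so the maximal clique $T_{vu_1u_2}$ is disjoint from $M$. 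Either way $L(H)$ is not CIS. Suppose next that $H$ contains a bull as a subgraph, with triangle $t_1t_2t_3$ and two further distinct vertices $p_1,p_2$ outside the triangle joined to it by edges $t_1p_1$ and $t_2p_2$. Extend the matching $\{t_1p_1,t_2p_2\}$ to a maximal matching $M$; since $t_1$ and $t_2$ are saturated by $M$, none of $t_1t_2,t_2t_3,t_1t_3$ lies in $M$, so the maximal clique $T_{t_1t_2t_3}$ is disjoint from $M$, and again $L(H)$ is not CIS.

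For the converse, assume $H$ has no bull subgraph and every maximal matching of $H$ is absorbing; let $Q$ be a maximal clique of $L(H)$, let $M$ be a maximal matching of $H$, and suppose $Q\cap M=\emptyset$. If $Q=S_v$, then $v$ is unsaturated, hence all its neighbours are saturated, and since $M$ is absorbing $v$ is absorbed; this is impossible for $d_H(v)\ge 3$, it forces for $d_H(v)=2$ the two neighbours of $v$ to be matched together — hence adjacent in $H$, contradicting maximality of $S_v$ — and for $d_H(v)=1$ the maximality of $S_v$ forces the component of $v$ to be a $K_2$, which every maximal matching saturates, contradicting that $v$ is unsaturated. If $Q=T_{abc}$, then, $M$ being maximal and containing none of $ab,bc,ca$, at least two of $a,b,c$ are saturated; if exactly two are, say $a,b$ while $c$ is not, then the neighbours $a,b$ of $c$ cannot lie in a common edge of $M$ (such an edge would be $ab\in M$), so $c$ is not absorbed, a contradiction. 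Hence $a,b,c$ are all saturated, by edges $aa',bb',cc'\in M$ whose other endpoints $a',b',c'$ are pairwise distinct and avoid $\{a,b,c\}$; but then the edges $a'a,\,ab,\,bc,\,ca,\,bb'$ form a bull subgraph of $H$ on the vertex set $\{a',a,b,c,b'\}$ — the triangle $abc$ with pendant edges $a'a$ and $b'b$ — contradicting bull‑freeness. This exhausts the cases, so $L(H)$ is CIS.

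I expect the genuinely delicate point to be the census of which star cliques of $L(H)$ are maximal, i.e.\ the behaviour of vertices of degree at most two and of $K_2$‑components; this is precisely where the two hypotheses divide the labour, the absorbing condition handling star cliques and triangles with an unsaturated vertex, and the bull condition handling triangles all three of whose vertices are matched to the outside. It is also worth noting that one only needs a bull \emph{subgraph}, not an induced one, which is exactly what makes the final step of the converse immediate — no care is needed about possible extra edges among $a',a,b,c,b'$.
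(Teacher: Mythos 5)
Your proposal cannot be compared against an in-paper argument: the paper states this result as Theorem~\ref{thm:line-CIS} with an attribution to Boros, Gurvich and Milani\v{c}~\cite{MR3575013} and gives no proof of it, so the only question is whether your self-contained argument is sound --- and it is. Your reduction is the natural one: maximal stable sets of $L(H)$ are maximal matchings of $H$, and maximal cliques of $L(H)$ are the triangle cliques together with the stars $S_v$ that are maximal, and your census of which stars are maximal (degree at least three; degree two with non-adjacent neighbours; $K_2$-components) is complete and correctly used. Both directions check out: the forward direction correctly converts a non-absorbed unsaturated vertex into a maximal star or triangle clique missed by $M$, and converts a bull subgraph into a maximal matching extending $\{t_1p_1,t_2p_2\}$ that misses the triangle clique; the converse correctly splits a missed maximal clique into the star case (killed by absorption plus your maximality census) and the triangle case (one unsaturated vertex contradicts absorption since the absorbing edge would have to be $ab\notin M$; all three saturated yields the bull on $\{a',a,b,c,b'\}$, where only a bull \emph{subgraph}, not an induced one, is needed). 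Two cosmetic remarks: in the triangle case you only need the partners $a'$ and $b'$ (the edge $cc'$ is never used), and it is worth stating explicitly, as you implicitly use, that a clique properly contained in a star $S_v$ is never maximal, so the maximal cliques are exactly full stars and triangle edge sets.
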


\section{Randomly internally matchable graphs}\label{sec:random-internally-mathcable}

A graph $G$ is \emph{randomly matchable} if every matching of $G$ can be extended to a perfect matching, or, equivalently, if every maximal matching of $G$ is perfect. Clearly, a graph $G$ is randomly matchable if and only if each component of $G$ is randomly matchable. Therefore, the following
theorem due to Sumner completely characterizes the randomly matchable graphs.

\begin{theorem}[Sumner~\cite{MR530304}]\label{thm:Sumner}
A connected graph $G$ is randomly matchable if and only if
$G\cong K_{2n}$ or $G\cong K_{n,n}$ for some $n\ge 1$.
\end{theorem}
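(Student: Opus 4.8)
The plan is to prove the two implications separately: the backward direction by a direct check, and the forward direction by induction on the order of $G$.

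For sufficiency, I would verify that $K_{2n}$ and $K_{n,n}$ are randomly matchable. In $K_{2n}$, a maximal matching $M$ cannot leave two distinct vertices unsaturated — they would be adjacent, so the edge between them could be added to $M$ — and since $2n$ is even, $M$ is therefore perfect. In $K_{n,n}$ with bipartition $(A,B)$, every matching saturates equally many vertices of $A$ and of $B$, so a non-perfect maximal matching would leave unsaturated vertices $a\in A$ and $b\in B$, and the edge $ab$ could be added. The same counting argument shows that an unbalanced complete bipartite graph $K_{m,k}$ with $m\neq k$ is \emph{not} randomly matchable (a maximum matching saturating the smaller side is maximal but not perfect); I will want this fact in the forward direction.

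For necessity, let $G$ be connected and randomly matchable. Any maximal matching of $G$ is perfect, so $|V(G)|$ is even, say $|V(G)|=2n$, and I induct on $n$; the base case $n=1$ is immediate since $K_2\cong K_{1,1}$. For $n\ge 2$, the key tool is the reduction: for every edge $xy\in E(G)$, the graph $G-x-y$ is randomly matchable. Indeed, if $M'$ is a maximal matching of $G-x-y$, then $M'\cup\{xy\}$ is a matching of $G$ that is maximal (any edge of $G$ avoiding it would avoid $x$ and $y$ and thus contradict maximality of $M'$), hence perfect by hypothesis, so $M'$ saturates all of $V(G-x-y)$. Since random matchability passes to components, the inductive hypothesis applies to each component of $G-x-y$, which must therefore be a complete graph or a balanced complete bipartite graph; moreover $G-x-y$ has no isolated vertex, since $K_1$ is not randomly matchable. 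If $G\cong K_{2n}$ we are done; otherwise connectedness and non-completeness give non-adjacent vertices $u,w$ with a common neighbor $v$. It then remains to reassemble $G$ from the well-understood pieces of $G-u-v$ (and of a few other subgraphs obtained by deleting the ends of an edge), using the induced path through $v$ and the fact that $G$ as a whole is randomly matchable, so as to conclude $G\cong K_{n,n}$. A convenient device for the reassembly is to fix a perfect matching $\{a_1b_1,\dots,a_nb_n\}$ of $G$ and observe that, for $i\neq j$, deleting $a_ib_i$ and $a_jb_j$ from it — or, when those edges are present, replacing them by $a_ia_j$ or $a_ib_j$ — produces a matching that must extend to a perfect one; this forces $G[\{a_i,b_i,a_j,b_j\}]$ to be $2K_2$, $C_4$, or $K_4$ and yields the local symmetry rules $a_i\sim a_j\iff b_i\sim b_j$ and $a_i\sim b_j\iff a_j\sim b_i$, which I expect to propagate (via triples of indices and connectedness) to the desired dichotomy.

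The hard part will be exactly this reassembly step: showing that complete and balanced complete bipartite pieces can be glued together, consistently with random matchability of the whole graph, only into $K_{2n}$ or $K_{n,n}$ — in particular ruling out ``mixed'' gluings and the reappearance of an induced $P_4$, an odd cycle of length at least $5$, or a $k$-comb. This is an elementary but somewhat delicate case analysis, for which the perfect-matching/quadruple bookkeeping above is meant to be the organizing tool.
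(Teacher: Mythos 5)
This statement is Sumner's theorem, which the paper imports with a citation and does not prove, so your attempt can only be judged on its own merits. The sufficiency direction is complete, and your two auxiliary tools for necessity are correct: for any edge $xy$ the graph $G-x-y$ is again randomly matchable (a maximal matching $M'$ of $G-x-y$ together with $xy$ is maximal in $G$, hence perfect), and, for a fixed perfect matching $\{a_1b_1,\dots,a_nb_n\}$, deleting $a_ib_i,a_jb_j$ and inserting a cross edge leaves exactly two vertices exposed, which forces $a_i\sim a_j\iff b_i\sim b_j$ and $a_i\sim b_j\iff a_j\sim b_i$, so each quadruple $\{a_i,b_i,a_j,b_j\}$ induces $2K_2$, $C_4$, or $K_4$.

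The genuine gap is the step you yourself label ``the hard part'': nothing in the write-up actually derives the dichotomy $G\cong K_{2n}$ or $G\cong K_{n,n}$ from these local rules. Saying you ``expect'' the symmetry rules to propagate via triples of indices and connectedness is a plan, not an argument, and essentially the entire content of the theorem lives in that propagation. To close it you would have to prove, for instance, two concrete claims: (1) if some quadruple induces $K_4$ (equivalently, $G$ contains a triangle), then connectedness plus further exchange arguments across \emph{three} matching pairs force $G$ to be complete; (2) if no quadruple is a $K_4$, then the linked quadruples are all $C_4$'s and their orientations are globally consistent, so that after swapping the roles of $a_i,b_i$ on some pairs the two sides $\{a_1,\dots,a_n\}$ and $\{b_1,\dots,b_n\}$ form a complete bipartition. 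Both claims need new matchings built across three pairs (the pairwise rules alone do not exclude, say, a $K_4$-quadruple coexisting with a $C_4$-quadruple at a common index), and neither is carried out. Likewise, the inductive frame via $G-x-y$ is set up but the induction hypothesis is never actually used to conclude anything about $G$ itself; you would need to commit to one of the two routes (induction on components of $G-x-y$, or the direct quadruple analysis) and finish it. As it stands, the necessity direction is unproven.
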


The concept of perfect internal matchings naturally leads to the following generalization of randomly matchable graphs. We say that a graph $G$ is \emph{randomly internally matchable} if every matching of $G$ can be extended to a perfect internal matching, or, equivalently, if every maximal matching of $G$ is a perfect internal matching. Using this terminology, we note, for later use, the following consequence of Theorem~\ref{thm:line-CIS}.

\begin{corollary}\label{cor:line-triangle-free-CIS-ii}
Let $H$ be a triangle-free graph without isolated vertices and let $G = L(H)$. Then $G$ is CIS if and only if $H$ is randomly internally matchable.
\end{corollary}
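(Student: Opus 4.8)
The plan is to deduce Corollary~\ref{cor:line-triangle-free-CIS-ii} directly from Theorem~\ref{thm:line-CIS}, exploiting the triangle-freeness of $H$ in two ways: to eliminate the bull condition and to identify the ``absorbing'' condition with being randomly internally matchable. First I would observe that since $H$ is triangle-free, $H$ contains no bull (the bull has a triangle, namely $v_2v_3v_5$), so the first condition in Theorem~\ref{thm:line-CIS} is vacuous; hence $G = L(H)$ is CIS if and only if every maximal matching in $H$ is absorbing. It therefore suffices to prove that, for a triangle-free graph $H$ without isolated vertices, every maximal matching of $H$ is absorbing if and only if every maximal matching of $H$ is a perfect internal matching, i.e.\ $H$ is randomly internally matchable.

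For the easy direction, I would invoke the remark recorded just before Theorem~\ref{thm:line-CIS}: if $H$ has an edge, every maximal matching that is a perfect internal matching is absorbing. Since $H$ has no isolated vertices and is non-null, $H$ has an edge, so if $H$ is randomly internally matchable then every maximal matching is a perfect internal matching and hence absorbing, which gives that $G$ is CIS.

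For the converse, suppose every maximal matching $M$ of $H$ is absorbing, and let $v$ be a vertex not saturated by $M$; I must show $v$ is of degree at most $1$ (so that $M$ is a perfect internal matching). By the definition of absorbing, there is an edge $e = xy \in M$ such that every neighbor of $v$ is an endpoint of $e$, so $N_H(v) \subseteq \{x,y\}$. Since $v$ is not isolated, $N_H(v)$ is nonempty; I claim it cannot be all of $\{x,y\}$. Indeed, if $v$ were adjacent to both $x$ and $y$, then $\{v,x,y\}$ would be a triangle in $H$, contradicting triangle-freeness. Hence $|N_H(v)| = 1$, i.e.\ $d_H(v) \le 1$. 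As $v$ was an arbitrary unsaturated vertex and $M$ an arbitrary maximal matching, every maximal matching of $H$ is a perfect internal matching, so $H$ is randomly internally matchable.

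The argument is essentially a short unwinding of definitions once Theorem~\ref{thm:line-CIS} is available, so there is no real obstacle; the only point requiring a moment's care is making sure the triangle-free hypothesis is used correctly to kill the bull (via the triangle it contains) and to upgrade ``absorbing'' to ``perfect internal'' (via the fact that a common neighbor of both endpoints of a matching edge would create a triangle). I would also note explicitly that the hypothesis ``$H$ without isolated vertices'' is needed both to apply Theorem~\ref{thm:line-CIS} and to guarantee that the unsaturated vertex $v$ has a neighbor, pinning its degree down to exactly $1$ rather than $0$ or less.
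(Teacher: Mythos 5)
Your proposal is correct and follows essentially the same route as the paper, which also derives the corollary from Theorem~\ref{thm:line-CIS} by noting that a triangle-free graph contains no bull and that, for triangle-free graphs, absorbing maximal matchings coincide with perfect internal matchings. You simply spell out the details (the bull's triangle, and the fact that an unsaturated vertex adjacent to both endpoints of a matching edge would force a triangle) that the paper leaves implicit.
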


\begin{proof}
Immediately from Theorem~\ref{thm:line-CIS}, using the fact that if $H$ is triangle-free, then $H$ has no subgraph isomorphic to a bull and
a maximal matching in $H$ is absorbing if and only if it is a perfect internal matching.
\end{proof}

Clearly, a graph $G$ is randomly internally matchable if and only if each component of $G$ is randomly internally matchable. In the next theorem, we characterize the connected randomly internally matchable graphs. A \emph{leaf} in a graph is a vertex od degree one. A \emph{leaf extension} of a graph $G$ is any graph obtained from $G$ by adding for each vertex $v\in V(G)$ a non-empty set $L_v$ of pairwise non-adjacent new vertices joined to $v$ by an edge.

\begin{theorem}\label{thm:randomly-internally-matchable}
A connected graph $G$ is randomly internally matchable
if and only if $G\cong K_{2n}$ for some $n\ge 1$, $G\cong K_{n,n}$ for some $n\ge 1$, or $G$ is a leaf extension of some graph.
\end{theorem}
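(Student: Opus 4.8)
The plan is to prove the two implications separately; the direction ``$\Leftarrow$'' is routine, and in the direction ``$\Rightarrow$'' the work splits according to whether $G$ has a leaf.

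For ``$\Leftarrow$'': if $G\cong K_{2n}$ or $G\cong K_{n,n}$, then $G$ is randomly matchable by Theorem~\ref{thm:Sumner}, so every maximal matching is perfect and hence saturates every internal vertex. If $G$ is a leaf extension of a graph $H$ with pendant sets $L_v$ ($v\in V(H)$), then every vertex of $\bigcup_{v}L_v$ has degree one, so the internal vertices of $G$ all lie in $V(H)$; and a maximal matching $M$ must saturate every $v\in V(H)$, for otherwise, picking any $\ell\in L_v$ (which is non-empty), either $\ell$ is unsaturated, so $v\ell$ could be added to $M$, or $\ell$ is saturated, so it is matched to $v$ (its only neighbour) --- either way a contradiction.

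For ``$\Rightarrow$'', suppose $G$ is connected and randomly internally matchable. Small cases being immediate (e.g.\ $|V(G)|=2$ gives $G\cong K_{1,1}$), we assume $|V(G)|\ge 3$. If $\delta(G)\ge 2$, then every vertex of $G$ is internal, so every maximal matching of $G$ is perfect, $G$ is randomly matchable, and Theorem~\ref{thm:Sumner} yields $G\cong K_{2n}$ or $G\cong K_{n,n}$. So assume $G$ has a leaf, and write $V(G)=A\cup B\cup L$, where $L$ is the set of leaves, $B$ is the set of non-leaves having a leaf neighbour, and $A$ is the set of non-leaves having no leaf neighbour. Since $G$ is connected and $|V(G)|\ge 3$, no leaf is adjacent to a leaf, so each leaf has a unique neighbour, which belongs to $B$; in particular $B\neq\emptyset$, and the internal vertices of $G$ are precisely those in $A\cup B$. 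The crucial step is to show that $A=\emptyset$: once this is known, $G$ is the leaf extension of $G[B]$ obtained by attaching to each $v\in B$ the non-empty stable set $L_v:=N_G(v)\cap L$ of its leaf neighbours, since the $L_v$ are pairwise disjoint with union $L$, each $L_v$ is stable, and the only edges of $G$ incident with $L$ are those of the form $v\ell$ with $v\in B$ and $\ell\in L_v$.

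To prove $A=\emptyset$, suppose $A\neq\emptyset$; I will exhibit a maximal matching of $G$ that fails to saturate an internal vertex, contradicting the hypothesis. Match each vertex of $B$ to one of its leaf neighbours; these edges are pairwise disjoint and saturate all of $B$, and any leaf not used in this way is then unsaturated but, having only a saturated vertex of $B$ as its neighbour, admits no further matching edge. It remains to match within $G[A]$. If $G[A]$ is not randomly matchable (in particular, if it has an isolated vertex), pick a maximal matching of $G[A]$ that misses some vertex $q$; adjoining it to the edges already chosen gives a maximal matching of $G$ --- the unsaturated vertices are the unused leaves together with the vertices of $A$ left unsaturated by $G[A]$, and these form a stable set because vertices of $A$ have no leaf neighbours --- which misses the internal vertex $q$. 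If instead $G[A]$ is randomly matchable, then by Theorem~\ref{thm:Sumner} each of its components is isomorphic to $K_{2m}$ or $K_{m,m}$ for some $m\ge 1$, and so has even order at least $2$; since $G$ is connected and $B\neq\emptyset$, some $q_0\in A$ has a neighbour $b_0\in B$. Letting $Q$ be the component of $G[A]$ containing $q_0$, the graph $Q-q_0$ is a copy of $K_{2m-1}$ or $K_{m-1,m}$, hence has odd order and no perfect matching, so $G[A]-q_0$ has a maximal matching missing some vertex $q'\in A$. Replacing the edge from $b_0$ to a leaf by the edge $b_0q_0$, matching the remaining vertices of $B$ to leaves, and adjoining the above matching of $G[A]-q_0$ produces a maximal matching of $G$ that misses the internal vertex $q'$.

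The main obstacle is precisely this last configuration: when $G[A]$ happens to be randomly matchable, the naive ``saturate $B$, then fill in $G[A]$'' approach saturates all of $A$ and gives no contradiction, so one must use an edge between $A$ and $B$ to break the parity of one component of $G[A]$, which forces a second appeal to Theorem~\ref{thm:Sumner}. The remaining verifications --- maximality of the matchings constructed, the fact that the sets $L_v$ display $G$ as a leaf extension of $G[B]$, and the small cases --- are straightforward.
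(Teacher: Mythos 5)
Your proof is correct and follows essentially the same route as the paper's: the same decomposition into the leaves $L$, their neighbours $B$ (the paper's $S$), and the remaining internal vertices $A$ (the paper's $R$), the same maximal matchings built from one leaf-edge per vertex of $B$ plus a maximal matching inside $G[A]$, and the same connectivity-plus-parity argument using an edge between $A$ and $B$ when $G[A]$ is randomly matchable. The only difference is organisational: the paper first deduces that $G[R]$ is randomly matchable and then derives the parity contradiction, whereas you split into the two cases directly (and invoke Sumner's theorem in the second case, where the even order of the components of $G[A]$ would already suffice).
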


\begin{proof}
Sufficiency (the ``if'' direction). If $G\cong K_{2n}$ or $G\cong K_{n,n}$ for some $n\ge 1$, then $G$ is randomly matchable and therefore also randomly internally matchable. Suppose now that $G$ is a leaf extension of a graph $G'$. Let $M$ be a maximal matching in $G$ and let $v\in V(G)$ be an internal vertex of $G$. Then, $v\in V(G')$ and there exists a vertex $v'$ of degree one in $G$ such that $vv'\in E(G)$. If $v$ is not $M$-saturated, then neither is $v'$ and hence $M\cup\{vv'\}$ is a matching properly containing $M$, contrary to the maximality of $M$. Therefore, $v$ is in $M$ and, since $v$ and $M$ were arbitrary, $G$ is randomly internally matchable.

Necessity (the ``only if'' direction). Suppose that $G$ is a connected randomly internally matchable graph. Clearly, $G$ has at least two vertices and hence $\delta(G)\ge 1$. Suppose first that $\delta(G)\ge 2$. Then all vertices of $G$ are internal and hence $G$ is randomly matchable. By Theorem \ref{thm:Sumner}, $G\cong K_{2n}$ or $G\cong K_{n,n}$ for some $n\ge 2$.
Suppose now that $\delta(G)= 1$. We may assume that $G$ has at least three vertices, since otherwise $G\cong K_2$ and we are done.
In particular, for every leaf of $G$, its unique neighbor is an internal vertex. Let $L$ denote the set of all leaves in $G$, let $S= N_G(L)$ denote the set of all neighbors of leaves in $G$, and let $R = V(G)\setminus (L\cup S)$. If $R = \emptyset$, then $G$ is a leaf extension of $G[S]$ and we are done. So we may assume that $R\neq \emptyset$. For every $s\in S$, fix a vertex $s'\in L$ adjacent to $s$ and let $M_S = \{ss'\mid s\in S\}$.
Let $M_R$ be a maximal matching of the graph $G[R]$. Then, $M_R\cup M_S$ is a maximal matching in $G$. Since the set of internal vertices of $G$ is precisely $S\cup R$ and $G$ is randomly internally matchable, $M_R\cup M_S$ saturates all vertices in $S\cup R$. Consequently, $M_R$ saturates all vertices in $R$. Since $M_R$ was an arbitrary maximal matching of $G[R]$, we infer that $G[R]$ is randomly matchable. In particular, every connected component of $G[R]$ has even order. (Theorem \ref{thm:Sumner} exactly characterizes the structure of $G[R]$ but we will not need this characterization in the rest of the proof.)
Since $G$ is connected, it has an edge of the form $rv$ where $r\in R$ and $v\not\in R$. Since no vertex in $R$ has a neighbor in $L$, we have $v\in S$.
Then, the set $M' = (M_S\setminus\{vv'\})\cup \{rv\}$ is a matching in $G$. Extend $M'$ to a maximal matching $M$ in $G$. Since $G$ is randomly internally matchable, $M$ saturates all vertices in $S\cup R$. Let $C$ be the component of $G[R]$ containing $r$ and let $M_C$ be the set of edges of $M$ fully contained in $C$. By construction, every edge in $M$ saturating a vertex in $C\setminus \{r\}$ belongs to $M_C$. However, this implies that $C$ is of odd order, a contradiction with the fact that every component of $G[R]$ has even order. This completes the proof.
\end{proof}

\section{Claw-free CIS graphs}\label{sec:claw-free}

In this section we develop a structural characterization of claw-free CIS graphs. We start with several lemmas giving necessary conditions for claw-free CIS graphs. The {\it gem} is the graph obtained from the $4$-vertex path $P_4$ by adding to it a universal vertex.

\begin{lemma}\label{lem:claw-free-CIS-gem-free}
Let $G$ be a claw-free CIS graph. Then $G$ is gem-free.
\end{lemma}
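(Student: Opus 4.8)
The plan is to argue by contradiction. Suppose that $G$ is a claw-free CIS graph containing an induced gem; label its vertices so that $v_1v_2v_3v_4$ is the underlying induced $P_4$ (with edges $v_1v_2$, $v_2v_3$, $v_3v_4$) and $u$ is the apex, adjacent to each of $v_1,v_2,v_3,v_4$. Thus the only non-edges inside the gem are $v_1v_3$, $v_1v_4$, and $v_2v_4$. The key point is that $\{u,v_2,v_3\}$ is a triangle of $G$ while $\{v_1,v_4\}$ is a stable set of $G$, and these two sets are ``cross-compatible'': each vertex of the triangle is adjacent to a vertex of the stable set and vice versa, so that applying the CIS property to suitable extensions of them will force a claw centered at $u$.

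Concretely, I would extend $\{u,v_2,v_3\}$ to a maximal clique $Q$ of $G$ and extend $\{v_1,v_4\}$ to a maximal stable set $I$ of $G$. Since $G$ is CIS, there is a vertex $q\in Q\cap I$. First I would check that $q$ is none of the five gem vertices: $u$ and $v_2$ are adjacent to $v_1\in I$ and $v_3$ is adjacent to $v_4\in I$, so none of $u,v_2,v_3$ can lie in the stable set $I$; and $v_1$ is non-adjacent to $v_3\in Q$ while $v_4$ is non-adjacent to $v_2\in Q$, so neither $v_1$ nor $v_4$ can lie in the clique $Q$. Hence $q\notin\{u,v_1,v_2,v_3,v_4\}$. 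Now, since $q\in Q$, $u\in Q$, and $q\neq u$, we get $q\sim u$; and since $q\in I$, $v_1,v_4\in I$, and $q\notin\{v_1,v_4\}$, we get $q\not\sim v_1$ and $q\not\sim v_4$. Together with the non-edge $v_1v_4$, this shows that $\{q,v_1,v_4\}$ is an independent set all of whose vertices are neighbors of $u$, so $G[\{u,q,v_1,v_4\}]$ is an induced claw, contradicting the claw-freeness of $G$.

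I do not anticipate a real obstacle; the proof is short once the right objects are chosen, and the one subtlety worth stressing is exactly that choice. If one extended only $\{v_2,v_3\}$ (without the apex $u$) to a maximal clique, the CIS property would still yield a common vertex $q$ with $q\sim v_2,v_3$ and $q\not\sim v_1,v_4$, but nothing would then force $q$ to be adjacent to $u$, so no claw would appear; it is precisely by insisting that $u\in Q$ that the argument closes. For the same reason, invoking Lemma~\ref{lem:CIS-settled} on the induced $2$-comb on $\{v_1,v_2,v_3,v_4\}$ with clique $\{v_2,v_3\}$ only supplies a vertex adjacent to $\{v_2,v_3\}$ and missing $\{v_1,v_4\}$, which on its own need not be adjacent to $u$, so working directly from the definition of a CIS graph is cleaner here.
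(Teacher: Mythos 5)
Your proof is correct, and it takes a genuinely different and in fact more economical route than the paper. The paper never applies the CIS definition directly to the gem; instead it invokes the settled-comb condition (Lemma~\ref{lem:CIS-settled}) on the induced $P_4$ of the gem to obtain a vertex $x$ adjacent to the two middle vertices and missing the two ends, uses claw-freeness to show $x$ is non-adjacent to the apex, recognizes the resulting six vertices as an induced $3$-anticomb, applies Lemma~\ref{lem:CIS-settled} a second time to get a further vertex $y$, and only then exhibits a claw. Your argument short-circuits this: by extending the triangle $\{u,v_2,v_3\}$ (crucially including the apex) to a maximal clique $Q$ and $\{v_1,v_4\}$ to a maximal stable set $I$, the CIS vertex $q\in Q\cap I$ is forced, as you verify, to lie outside the gem, to be adjacent to $u$, and to be non-adjacent to $v_1,v_4$, so $\{u,q,v_1,v_4\}$ is an induced claw immediately. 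All the verifications (why $q$ avoids the five gem vertices, why the adjacencies follow from membership in $Q$ and $I$) are stated and correct. What the two approaches buy: yours is shorter, uses claw-freeness only once (at the final contradiction), and is self-contained modulo the definition of CIS; the paper's stays within the comb/anticomb framework it has already set up, at the cost of a two-step argument and an extra use of claw-freeness. Your closing remark about why settling only the $2$-comb $\{v_1,v_2,v_3,v_4\}$ would not suffice is also accurate --- it is precisely the gap the paper fills with its second (anticomb) application, and which you avoid by putting the apex into the clique from the start.
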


\begin{proof}
Suppose for a contradiction that $G$ is a claw-free CIS graph containing an induced copy of a gem, say on vertex set $\{s,t,u,v,w\}$ where $(s,t,u,v)$ is a path and $w$ is adjacent to all vertices in $\{s,t,u,v\}$.
Since the subgraph of $G$ induced by $\{s,t,u,v\}$ is isomorphic to an induced $2$-comb and $G$ is CIS, Lemma \ref{lem:CIS-settled} implies that there exists a vertex $x\in V(G)\setminus \{s,t,u,v\}$ such that $\{tx,ux\}\subseteq E(G)$
and $\{sx,vx\}\cap E(G) = \emptyset$.
Clearly $x\neq w$.
Furthermore, $xw\not\in E(G)$, since otherwise $\{w,s,v,x\}$ would induce a claw in $G$. This implies that $\{s,t,u,v,w,x\}$ induces a $3$-anticomb in $G$. (Indeed, the complement of $G$ contains a comb $F_3$ having a clique $\{s,v,x\}$ and a stable set $\{t,u,w\}$.) By Lemma \ref{lem:CIS-settled}, $G$ contains a vertex $y\in V(G)\setminus \{s,t,u,v,w,x\}$ adjacent to every vertex in the clique $\{t,u,w\}$ and non-adjacent to every vertex in the stable set $\{s,v,x\}$. But now, the vertex set $\{s,t,x,y\}$ induces a claw in $G$, a contradiction.
\end{proof}

We denote by $W_4$ the {\it $4$-wheel}, that is,  the graph obtained from the $4$-vertex cycle $C_4$ by adding to it a universal vertex.

\begin{lemma}\label{lem:claw-free-gem-free}
Every connected and co-connected $\{$claw, gem$\}$-free graph is $W_4$-free.
\end{lemma}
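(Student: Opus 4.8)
The plan is to argue by contradiction. Suppose $G$ is connected, co-connected and $\{$claw, gem$\}$-free but contains an induced $W_4$, on the vertex set $\{a,b,c,d,e\}$ with $e$ the hub and $a$-$b$-$c$-$d$-$a$ the rim $4$-cycle. I would first isolate the following local fact, call it $(\ast)$: every vertex $v\in V(G)\setminus\{a,b,c,d,e\}$ adjacent to $e$ is adjacent to at least three of $a,b,c,d$. Its proof is short: if $v$ missed two opposite rim vertices, say $a$ and $c$, then $\{e;a,c,v\}$ would be a claw; and if $v$ missed two consecutive rim vertices, say $a$ and $b$, then $v$ is adjacent to $c$ and to $d$ by the previous sentence, so $\{a,b,c,v\}$ induces a $P_4$ all of whose vertices are neighbours of $e$ --- a gem. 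Since any two rim vertices are opposite or consecutive, $v$ misses at most one of them.

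Next I would reduce to an octahedron. Co-connectedness makes $e$ non-universal, so $V(G)\setminus N[e]\neq\emptyset$, and connectedness then gives an edge $fg$ with $f\notin N[e]$ and $g\in N(e)$. By $(\ast)$, $g$ has at least three rim neighbours; relabelling the rim, $g$ is adjacent to $a,b,c$. Claw-freeness at $g$, applied to the stable pair $\{a,c\}\subseteq N(g)$ together with $f$, forces $f$ adjacent to $a$ or $c$; say $f\sim a$. A short case analysis on whether $f$ is adjacent to $b$, to $c$, to $d$ then finishes this phase: each missing adjacency makes some already-named vertex ($g$, or a rim vertex, or $e$) adjacent to all four vertices of an induced $P_4$ among the named vertices --- yielding a gem (for example $g$ with the $P_4$ $f$-$a$-$b$-$c$ when $f\not\sim b,c$; $b$ with the $P_4$ $f$-$a$-$e$-$c$ when $f\sim b,\ f\not\sim c$; $a$ with the $P_4$ $f$-$b$-$e$-$d$ when $f\sim b,c,\ f\not\sim d$) --- or else produces a claw; the lone surviving possibility is that $f$ is adjacent to all of $a,b,c,d$, in which case $\{a,b,c,d,e,f\}$ induces $K_{2,2,2}$ with parts $\{a,c\},\{b,d\},\{e,f\}$. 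Hence it is enough to show that $G$ contains no induced octahedron $O=K_{2,2,2}$.

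To rule out $O$ (its three parts being its three non-adjacent pairs), the key observation is that every vertex $w$ of $O$ is the hub of an induced $W_4$ inside $O$, since $N_O(w)$ is a $C_4$; so $(\ast)$ is available at every vertex of $O$. I would then prove, in order: (i) no vertex of $G$ is non-adjacent to all of $V(O)$ --- otherwise, using $(\ast)$ and claw-freeness, the set of all such vertices would have no edges to the rest of $G$, contradicting connectedness; (ii) for each $v\in V(G)\setminus V(O)$, the set of $G$-non-neighbours of $v$ within $V(O)$ lies in a single part of $O$ --- otherwise, invoking $(\ast)$ at two $O$-vertices from different parts together with (i) yields a contradiction; (iii) giving each $v\notin V(O)$ a ``home'' part (the part meeting its non-neighbourhood, if any; first noting via a claw that $v$ misses at most one vertex of its home), no edge of $\overline{G}$ joins vertices with distinct homes --- otherwise some $O$-vertex $w$ has an induced $P_4$ in $N(w)$ (the mutual non-edges of the six relevant vertices form a path in $\overline{G}$), i.e.\ a gem. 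Consequently $V(G)$ partitions into at least three classes (one per part of $O$, plus possibly the vertices complete to $V(O)$) between which $\overline{G}$ has no edges, so $\overline{G}$ is disconnected --- contradicting co-connectedness.

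The main obstacle is the octahedron phase, and within it step (iii): one must set up the ``home-part'' partition correctly and verify that every $\overline{G}$-edge crossing it produces a gem (or a claw). The bridging-edge case analysis of the reduction phase is routine but a little lengthy, and the short fact $(\ast)$ is what drives the whole argument.
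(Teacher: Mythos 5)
Your route is genuinely different from the paper's, and in outline it works: fact $(\ast)$ is proved correctly, and the octahedron phase is sound --- claims (i)--(iii) do follow from $(\ast)$ plus claw/gem forcing (for instance, for (iii) with $u$ missing $x_1$ and $v$ missing $x_2$ in different parts, $u\mbox{-}x_2\mbox{-}x_1\mbox{-}v$ is an induced $P_4$ dominated by a vertex of the third part, a gem), and the resulting partition indeed disconnects $\overline{G}$. There is, however, one overlooked case in the reduction phase. When you choose the edge $fg$ with $f\notin N[e]$ and $g\in N(e)$, the vertex $g$ may itself be a rim vertex; then $(\ast)$ does not apply to $g$ (a rim vertex has only two rim neighbours), so you cannot relabel to make $g$ adjacent to $a,b,c$, and the ensuing case analysis, which in one branch uses $g$ as the universal vertex of a gem, does not literally go through. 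This case is not marginal: in the octahedron itself every neighbour of $f$ inside $N(e)$ is a rim vertex, so it is precisely the configuration your argument is heading towards. Fortunately it is repairable with the same tools: if, say, $g=a$, the claw on $\{a;f,b,d\}$ forces $f$ adjacent to (say) $b$; if $f\not\sim c$ the set $\{f,a,e,c\}$ is an induced $P_4$ dominated by $b$ (gem), and if $f\not\sim d$ the set $\{f,b,e,d\}$ is an induced $P_4$ dominated by $a$ (gem); hence $f$ is complete to the rim and you again obtain the octahedron. With this patch (and the elided subcase $f\sim c$, $f\not\sim b$, which also checks out) your proof is complete.

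For comparison, the paper avoids the octahedron altogether. It first shows, using connectedness and a claw/gem analysis in the spirit of your $(\ast)$, that every vertex outside the rim $X$ has at least two neighbours in $X$; then, since $\overline{G}$ is connected and the hub belongs to the set $Z$ of vertices complete to $X$, it takes an induced path $(z_1,z_2,x_1)$ in $\overline{G}$ with $z_1\in Z$ and exhibits a gem on $\{x_1,z_1,x_3,z_2,x_2\}$ directly. That is considerably shorter: co-connectedness is spent on a single $\overline{G}$-path rather than on a global partition of $V(G)$. Your version is longer but more structural --- it pins down the extremal configuration ($K_{2,2,2}$) that a $W_4$ would force and shows explicitly how its presence splits $\overline{G}$, at the price of the extra bridging-edge case analysis noted above.
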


\begin{proof}
Suppose for a contradiction that $G$ is a connected and co-connected
 claw-free gem-free graph that is not $W_4$-free. Then, $G$ contains an induced $4$-cycle $(x_1,x_2,x_3,x_4)$
  and a vertex $z$ adjacent to all vertices in $X = \{x_1,x_2,x_3,x_4\}$. We claim that every vertex
  of $G$ not in $X$ has a neighbor in $X$. Suppose that this is not the case. Since $G$ is connected,
  we may assume that $G$ contains an induced path $(y_1,y_2,x_1)$ such that $y_1$ has no neighbors in $X$.
  Note that $y_2$ is not adjacent to $x_3$, since otherwise $G$ would contain an induced claw on vertex set $\{y_2,y_1,x_1,x_3\}$.
Furthermore, $y_2$ is adjacent to exactly one of $x_2$ and $x_4$ since otherwise $G$ would contain an
induced claw on vertex set
$\{y_2,y_1,x_2,x_4\}$ (if $y_2$ is adjacent to both $x_2$ and $x_4$) or on vertex set $\{y_1,x_1,x_2,x_4\}$ (if $y_2$ is
adjacent to neither $x_2$ nor $x_4$).
 Hence, without loss of generality, we may assume that $y_2$ is adjacent to $x_2$ but not to $x_4$.
Consequently, $G$ contains an induced gem either on vertex set
$\{y_2,x_2,x_3,x_4,z\}$ (if $y_2$ is adjacent to $z$) or on vertex set $\{y_2,x_1,z,x_3,x_2\}$ (otherwise).
This contradiction shows that every vertex not in $X$ has a neighbor in $X$. Even more, since $G$ is claw-free, every vertex not in $X$ has at least two neighbors in $X$.

Since $G$ is co-connected, its complement $\overline{G}$ is connected. Let $Z$ be the set of vertices of
$\overline{G}$ with no neighbors in $X$.
Note that $Z$ is non-empty since $z\in Z$. Since $\overline{G}$ is connected,
we may assume that $\overline{G}$ contains an induced path $(z_1,z_2,x_1)$ with $z_1\in Z$.
This means that in $G$ there exist non-adjacent vertices $z_1$ and $z_2$
 such that $z_2$ is
non-adjacent to $x_1$, and $z_1$ is adjacent to all vertices of $X$.
Since we have proved that every vertex of $G$ not in $X$ has at least two neighbors
 in $X$, without loss of generality, we can assume that $z_2$ is adjacent to
$x_2$. And therefore, $z_2$ must be adjacent also to $x_3$,
otherwise the vertices $\{z_2,x_1,x_2,x_3\}$ induce a claw in  $G$.
 Since $z_1$ and $z_2$ are non-adjacent in $G$, we infer that $G$ contains an induced gem on vertex set $\{x_1,z_1,x_3,z_2,x_2\}$, a contradiction.
\end{proof}

Kloks et al.~showed in~\cite{MR1354190} that the class of $\{$claw, gem, $W_4\}$-free graphs is exactly the class of {\it dominoes}, that is, graphs in which each vertex is contained in at most two maximal cliques. Furthermore, the dominoes are precisely the line graphs of triangle-free multigraphs; see also~\cite{MR2002172}. This result together with
Lemmas~\ref{lem:claw-free-CIS-gem-free} and~\ref{lem:claw-free-gem-free} implies the following.

\begin{corollary}\label{cor:claw-free-CIS-line}
Every connected and co-connected claw-free CIS graph is the line graph of a connected triangle-free multigraph.
\end{corollary}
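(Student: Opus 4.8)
The plan is to chain the two preceding lemmas together with the characterization of dominoes due to Kloks et al.~\cite{MR1354190}. Let $G$ be a connected and co-connected claw-free CIS graph. First I would apply Lemma~\ref{lem:claw-free-CIS-gem-free} to conclude that $G$ is gem-free. Since $G$ is moreover connected and co-connected, Lemma~\ref{lem:claw-free-gem-free} then applies and yields that $G$ is $W_4$-free. Hence $G$ is $\{$claw, gem, $W_4\}$-free.

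Next I would invoke the result of Kloks et al.~\cite{MR1354190}, together with the identification of dominoes with line graphs of triangle-free multigraphs recalled just above the statement: every $\{$claw, gem, $W_4\}$-free graph is a domino, hence isomorphic to the line graph $L(H)$ of some triangle-free multigraph $H$. Using the reduction discussed in Section~\ref{subsec:prelim-line}, we may assume $H$ is loopless.

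It remains to arrange that $H$ is connected. Since $G$ is non-null, $H$ has at least one edge; and because adjacency in $L(H)$ corresponds to edges of $H$ sharing an endpoint, the connectedness of $G = L(H)$ forces all edges of $H$ to lie in a single connected component of $H$. Deleting the remaining (edgeless) components of $H$ produces a connected triangle-free multigraph $H'$ with $L(H') \cong L(H) \cong G$, which completes the proof.

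I do not expect a genuine obstacle here: the structural content is entirely carried by Lemmas~\ref{lem:claw-free-CIS-gem-free} and~\ref{lem:claw-free-gem-free} and by the cited domino characterization. The only point requiring any care is the passage from ``line graph of a triangle-free multigraph'' to ``line graph of a \emph{connected} triangle-free multigraph,'' i.e.\ the bookkeeping around isolated vertices and superfluous components of $H$, which is routine.
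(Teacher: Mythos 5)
Your proposal is correct and follows essentially the same route as the paper: Lemma~\ref{lem:claw-free-CIS-gem-free} gives gem-freeness, Lemma~\ref{lem:claw-free-gem-free} gives $W_4$-freeness, and the Kloks et al.~domino characterization identifies $G$ as the line graph of a triangle-free multigraph. The extra care you take about discarding edgeless components of $H$ to make it connected is exactly the routine bookkeeping the paper leaves implicit.
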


A lemma similar to Lemma~\ref{lem:components-CIS} holds for claw-free graphs. The lemma follows immediately from the definitions.

\begin{lemma}\label{lem:components-claw-free}
A graph $G$ is claw-free if and only if the true-twin reduction of each component of $G$ is claw-free.
\end{lemma}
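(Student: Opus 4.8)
The statement to prove is Lemma~\ref{lem:components-claw-free}: a graph $G$ is claw-free if and only if the true-twin reduction of each component of $G$ is claw-free.

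The plan is to prove both directions by analyzing how the true-twin reduction interacts with induced claws. First I would reduce to the level of a single component: since the claw is connected, $G$ contains an induced claw if and only if some component of $G$ does, and the true-twin reduction of $G$ is (the disjoint union of) the true-twin reductions of its components; so it suffices to show that a connected graph $C$ is claw-free if and only if its true-twin reduction $C'$ is claw-free.

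For the forward direction I would argue the contrapositive: suppose $C'$ contains an induced claw on vertices $a, b_1, b_2, b_3$ with center $a$. Each of these vertices of $C'$ corresponds to a true-twin class of $C$; picking one representative vertex $\tilde a, \tilde b_1, \tilde b_2, \tilde b_3$ from each class, adjacency between distinct classes is preserved (two distinct true-twin classes are either completely adjacent or completely non-adjacent in $C$, and this is exactly the adjacency recorded in $C'$). Hence $\{\tilde a, \tilde b_1, \tilde b_2, \tilde b_3\}$ induces a claw in $C$, so $C$ is not claw-free. For the reverse direction, suppose $C$ contains an induced claw on $\{a, b_1, b_2, b_3\}$ with center $a$. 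Map each vertex to the vertex of $C'$ representing its true-twin class. I claim these four images are pairwise distinct: $a$ is adjacent to each $b_i$ while $b_i$ is non-adjacent to $b_j$ for $i \neq j$, so no two of the four vertices can lie in the same true-twin class (true twins have the same closed neighborhood, hence the same adjacencies to all other vertices). Since adjacency between distinct true-twin classes is preserved in $C'$, the images form an induced claw in $C'$, so $C'$ is not claw-free.

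The main (minor) obstacle is just being careful with the bookkeeping about how induced subgraphs behave under true-twin contraction — specifically the observation that if $u$ and $v$ lie in different true-twin classes, then $u$ and $v$ are adjacent in $C$ if and only if their classes are adjacent in $C'$, and that no two vertices with differing adjacency to a common third vertex can be true twins. Both facts are immediate from the definition $N_G[x] = N_G[y]$, so there is essentially nothing deep here; the statement indeed "follows immediately from the definitions" as claimed, and the proof is a short verification of the two directions above.
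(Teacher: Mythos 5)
Your proof is correct and matches the paper's treatment: the paper simply states that the lemma follows immediately from the definitions, and your argument is exactly the routine verification being alluded to (claws are preserved and reflected by the true-twin reduction, componentwise). No issues.
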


\begin{sloppypar}
Lemmas~\ref{lem:components-CIS} and~\ref{lem:components-claw-free} imply that when studying CIS claw-free graphs, we may restrict our attention to connected true-twin-free graphs. Thus, the following theorem gives a complete structural characterization of claw-free CIS graphs. Given a graph $G$, the \emph{corona} of $G$ (with $K_1)$ is the graph $G\circ K_1$ obtained from $G$ by adding for each vertex $v\in V(G)$ a new vertex $v'$ and making $v'$ adjacent to $v$. Note that the corona of $G$ is a particular leaf extension of it.
\end{sloppypar}

\begin{theorem}\label{thm:claw-free-CIS}
Let $G$ be a connected true-twin-free claw-free graph.
Then $G$ is CIS if and only if one the following holds:
\begin{enumerate}
\item $G\cong \overline{pK_2+qK_1}$ for some $p\ge 0$ and $q\in \{0,1\}$.
\item $G\cong L(K_{n,n})$ for some $n\ge 1$.
\item $G\cong L(G'\circ K_1)$ for some triangle-free graph $G'$.
\end{enumerate}
\end{theorem}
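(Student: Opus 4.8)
\medskip

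\noindent The plan is to prove both directions: the ``if'' direction is a quick consequence of results already at hand, while the ``only if'' direction splits into two cases according to whether or not $G$ is co-connected. In the co-connected case I would feed Corollary~\ref{cor:claw-free-CIS-line} into Corollary~\ref{cor:line-triangle-free-CIS-ii} and Theorem~\ref{thm:randomly-internally-matchable}, using true-twin-freeness to rule out parallel edges and repeated leaves. The non-co-connected case, which I expect to be the main obstacle, is handled by a join decomposition together with a translation of the CIS condition into the complement.

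\emph{Sufficiency.} Suppose $G$ is one of the three graphs. If $G\cong\overline{pK_2+qK_1}$, then $G$ is the cocktail-party graph $\overline{pK_2}$, possibly with one added universal vertex (if $q=1$); in the cocktail-party graph the maximal cliques are exactly the transversals of the pairs and the maximal stable sets are exactly the pairs, so the graph is CIS, and adding a universal vertex preserves the CIS property. If $G\cong L(K_{n,n})$, then $K_{n,n}$ is triangle-free, has no isolated vertices, and is randomly matchable by Theorem~\ref{thm:Sumner}, hence randomly internally matchable, so $G$ is CIS by Corollary~\ref{cor:line-triangle-free-CIS-ii}. If $G\cong L(G'\circ K_1)$ with $G'$ triangle-free, then $G'\circ K_1$ is triangle-free with no isolated vertices, and it is a leaf extension of $G'$, hence randomly internally matchable by Theorem~\ref{thm:randomly-internally-matchable} applied to each component; again $G$ is CIS by Corollary~\ref{cor:line-triangle-free-CIS-ii}.

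\emph{Necessity, the co-connected case.} Assume $G$ is co-connected. By Corollary~\ref{cor:claw-free-CIS-line}, $G=L(H)$ for a connected triangle-free multigraph $H$. Any two parallel edges of $H$ would have equal closed neighbourhoods in $G$, i.e., be true twins, so $H$ has no parallel edges and is thus a connected triangle-free (simple) graph with no isolated vertices. By Corollary~\ref{cor:line-triangle-free-CIS-ii} the graph $H$ is randomly internally matchable, so by Theorem~\ref{thm:randomly-internally-matchable} we have $H\cong K_{2n}$, $H\cong K_{n,n}$, or $H$ is a leaf extension of some graph $G''$. Triangle-freeness forces $n=1$ in the first case, and $K_{1,1}=K_2$ is covered by the second. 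The second case yields option~(2). In the third case $G''$ is triangle-free, and if some vertex of $G''$ had two leaves attached in $H$ the two corresponding edges would again be true twins of $G$; hence each vertex of $G''$ has exactly one leaf, so $H\cong G''\circ K_1$ and we obtain option~(3).

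\emph{Necessity, the non-co-connected case.} Assume $G$ is the join of its co-components $G_1,\dots,G_m$ with $m\ge 2$. Using a vertex of another co-component as a potential claw-centre, claw-freeness of $G$ gives $\alpha(G_i)\le 2$ for every $i$. Since every stable set of $G$ lies in a single co-component while every maximal clique of $G$ is a union of one maximal clique from each co-component, $G$ is CIS if and only if every $G_i$ is CIS; moreover each $G_i$ is co-connected and inherits being true-twin-free (closed neighbourhoods only grow from $G_i$ to $G$). Fix $i$ and set $F=\overline{G_i}$, a connected graph. If $\alpha(G_i)=1$ then $G_i$ is complete and, being co-connected, $G_i\cong K_1$. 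Otherwise $\alpha(G_i)=2$, so $F$ is triangle-free with at least one edge; then the maximal stable sets of $G_i$ are exactly the edges of $F$ and the maximal cliques of $G_i$ are exactly the maximal independent sets of $F$, so ``$G_i$ is CIS'' is equivalent to ``every maximal independent set of $F$ is a vertex cover of $F$'', i.e., to ``the complement in $F$ of every maximal independent set is again independent''. In particular $F$ is bipartite; since a connected bipartite graph has a unique bipartition, every maximal independent set of $F$ must be one of the two colour classes, which forces $F$ to be complete bipartite. Finally, true-twin-freeness of $G_i$ means $F$ has no two non-adjacent vertices with equal neighbourhoods, so both sides of $F$ are singletons: $F\cong K_2$ and $G_i\cong 2K_1$. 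Hence each co-component is $K_1$ or $2K_1$, and at most one of them is $K_1$, since two singleton co-components would be two adjacent universal vertices of $G$, hence true twins. Writing $p$ for the number of co-components equal to $2K_1$ and $q\in\{0,1\}$ for the number equal to $K_1$, we obtain $G\cong\overline{pK_2+qK_1}$, which is option~(1). The crux of the proof is this last case, and within it the observation that, after complementing, ``CIS'' becomes exactly ``every maximal independent set is a vertex cover''.
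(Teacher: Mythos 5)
Your proof is correct, and while the sufficiency direction and the co-connected case of necessity follow essentially the same path as the paper (Corollary~\ref{cor:claw-free-CIS-line}, Corollary~\ref{cor:line-triangle-free-CIS-ii}, Theorem~\ref{thm:randomly-internally-matchable}, with true-twin-freeness eliminating parallel edges and multiple leaves), your treatment of the non-co-connected case is genuinely different. The paper handles that case by invoking Lemma~\ref{lem:claw-free-CIS-gem-free}: since $G$ is gem-free, each co-component is $P_4$-free, and the cograph recursion together with $\alpha(G_i)\le 2$ shows each co-component is $K_1$ or a disjoint union of two complete graphs, after which true-twin-freeness forces $K_1$ or $2K_1$. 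You instead pass to the complement of each co-component $G_i$: using that maximal cliques of a join are unions of maximal cliques of the co-components and stable sets live in a single co-component, you reduce the CIS property of $G_i$ (when $\alpha(G_i)=2$) to the statement that every maximal independent set of the connected triangle-free graph $F=\overline{G_i}$ is a vertex cover, which forces $F$ to be complete bipartite, and then twin-freeness gives $F\cong K_2$. Your route is more self-contained for this case --- it needs neither the gem-freeness lemma nor the cograph structure theorem there (gem-freeness still enters, unavoidably, through Corollary~\ref{cor:claw-free-CIS-line} in the co-connected case) --- at the cost of re-deriving by hand the description of maximal cliques/stable sets under join and the complete-bipartite characterization; the paper's version buys brevity by reusing machinery it has already set up. Your direct verification that $\overline{pK_2+qK_1}$ is CIS (transversals versus pairs, plus a universal vertex) is also a harmless substitute for the paper's appeal to complementation-closure and Lemma~\ref{lem:components-CIS}.
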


\begin{proof}
\noindent{Sufficiency} (the ``if'' direction).
Suppose first that $G\cong \overline{pK_2+qK_1}$ for some $p\ge 0$ and $q\in \{0,1\}$. Since the class of CIS graphs is closed under complementation,
it suffices to show that the graph $pK_2+qK_1$ is CIS. This follows easily from the definition and Lemma \ref{lem:components-CIS}.
Suppose next that $G\cong L(K_{n,n})$
for some $n\ge 1$ or $G\cong L(G'\circ K_1)$ for some triangle-free graph $G'$. In this case, Theorem \ref{thm:randomly-internally-matchable} implies that $G\cong L(H)$ where $H$ is a randomly internally matchable triangle-free graph without isolated vertices. Hence, by Corollary~\ref{cor:line-triangle-free-CIS-ii}, $G$ is CIS.

\medskip
\noindent{Necessity} (the ``only if'' direction). Let $G$ be connected true-twin-free
 CIS claw-free graph. Suppose first that $G$ is not co-connected and let $G_1,\ldots, G_k$
   (with $k \ge 2$) be the co-components of $G$. Since $k\ge 2$ and $G$ is claw-free,
   each co-component $G_i$ has stability number at most two. Since by
    Lemma \ref{lem:claw-free-CIS-gem-free} $G$ is gem-free, each co-component
    $G_i$ is $P_4$-free. By the recursive structure of $P_4$-free
    graphs~\cite{MR619603}, each $G_i$ is either $K_1$ or is disconnected. Since $\alpha(G_i) \le 2$, we infer each $G_i$ is either $K_1$ or the disjoint union of two complete graphs. Moreover, since $G$ is true-twin-free,
 at most one $G_i$ is the single-vertex graph, and each $G_i$ with
  $\alpha(G_i) = 2$ is isomorphic to $2K_1$. Consequently,
   $G\cong \overline{pK_2+qK_1}$ for some $p\ge 0$ and $q\in \{0,1\}$.

Suppose now that $G$ is co-connected. By Corollary~\ref{cor:claw-free-CIS-line}
 there exists a triangle-free multigraph $H$ such that $G = L(H)$.
Clearly, we may assume that $H$ has no isolated vertices.
Since $G$ is true-twin-free, $H$ is a simple graph. By Corollary~\ref{cor:line-triangle-free-CIS-ii}, $H$ is randomly internally matchable. Theorem \ref{thm:randomly-internally-matchable} implies that $H\cong K_{n,n}$ for some $n\ge 1$, or $H$ is a leaf extension of some (triangle-free) graph $H'$. In the former case, $G \cong L(K_{n,n})$ for some $n\ge 1$. In the latter case, the fact that $G$ is true-twin-free implies that $H$ is the corona of $H'$, that is, $G\cong L(H'\circ K_1)$.
\end{proof}

Theorem~\ref{thm:claw-free-CIS} has the following structural and algorithmic consequences.

\begin{corollary}\label{cor:claw-free-CIS}
A graph $G$ is claw-free and CIS if and only if the true-twin reduction of each component of $G$ is of the form $\overline{pK_2+qK_1}$ for some $p\ge 0$ and $q\in \{0,1\}$, $L(K_{n,n})$ for some $n\ge 1$, or $L(G'\circ K_1)$ for some triangle-free graph $G'$.
\end{corollary}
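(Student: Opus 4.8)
The plan is to reduce the statement to Theorem~\ref{thm:claw-free-CIS} by passing to the true-twin reduction component by component. First I would record the (routine) fact that two true twins of $G$ necessarily lie in the same component, since their closed neighborhoods coincide; consequently the true-twin reduction of $G$ is the disjoint union of the true-twin reductions of its components, and each of these is a connected, true-twin-free graph. Combining the two biconditionals in Lemmas~\ref{lem:components-CIS} and~\ref{lem:components-claw-free}, a graph $G$ is claw-free and CIS if and only if the true-twin reduction of every component of $G$ is claw-free and CIS. Since each such reduction is connected and true-twin-free, Theorem~\ref{thm:claw-free-CIS} is directly applicable to it.

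For the forward implication, I would take a component $C$ of $G$ and apply Theorem~\ref{thm:claw-free-CIS} to its true-twin reduction $C^{\ast}$ (which is connected, true-twin-free, claw-free, and CIS), concluding that $C^{\ast}$ has one of the three listed forms. For the converse I need two observations about the three forms. First, each of them is claw-free: $\overline{pK_2+qK_1}$ is claw-free because $pK_2+qK_1$ is triangle-free and a claw in a complement would force a triangle together with a vertex nonadjacent to all of it, while $L(K_{n,n})$ and $L(G'\circ K_1)$ are line graphs of simple graphs and hence claw-free. Second, each of them is CIS; this is exactly the content of the sufficiency part of Theorem~\ref{thm:claw-free-CIS} (whose proof does not use claw-freeness). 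Feeding these two facts back through Lemmas~\ref{lem:components-CIS} and~\ref{lem:components-claw-free} yields that $G$ itself is claw-free and CIS.

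I do not expect a real obstacle here: the corollary is essentially a bookkeeping step on top of Theorem~\ref{thm:claw-free-CIS}. The only point that needs a moment of care is the interplay between the true-twin reduction and the decomposition into components --- namely that the reduction of a component is again connected and true-twin-free, so that the hypotheses of Theorem~\ref{thm:claw-free-CIS} are met, and that ``the true-twin reduction of each component'' coincides with ``each component of the true-twin reduction.'' Both follow from the observation that true-twin classes are confined to single components and induce cliques, so contracting them preserves connectedness.
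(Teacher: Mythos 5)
Your proposal is correct and follows essentially the same route as the paper, which proves the corollary as an immediate consequence of Lemmas~\ref{lem:components-CIS} and~\ref{lem:components-claw-free}, Theorem~\ref{thm:claw-free-CIS}, and the observation that all three listed forms are claw-free. Your extra care about the reduction of a component being connected and true-twin-free (and the sufficiency argument not needing claw-freeness) just makes explicit what the paper leaves implicit.
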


\begin{proof}
Immediate from Lemmas~\ref{lem:components-CIS} and~\ref{lem:components-claw-free}, Theorem~\ref{thm:claw-free-CIS}, and the fact that all graphs of the form $\overline{pK_2+qK_1}$, $L(K_{n,n})$, or $L(G'\circ K_1)$ are claw-free.
\end{proof}

\begin{corollary}\label{cor:recognition}
There is a polynomial-time algorithm for recognizing claw-free CIS graphs.
\end{corollary}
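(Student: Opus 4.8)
The plan is to turn Corollary~\ref{cor:claw-free-CIS} directly into an algorithm, exploiting the fact that each of the three families occurring there admits a polynomial-time membership test. Given an input graph $G$ on $n$ vertices, one first computes its connected components (linear time) and, for each component $C$, its true-twin reduction $\widetilde{C}$; the latter is obtained by partitioning $V(C)$ into the classes of the relation ``equal closed neighborhood'' (each such class is a clique) and contracting every class to a single vertex, which takes polynomial time, e.g.\ by sorting the closed neighborhoods. By Corollary~\ref{cor:claw-free-CIS}, $G$ is claw-free and CIS if and only if every $\widetilde{C}$ is isomorphic to $\overline{pK_2+qK_1}$ for some $p\ge 0$ and $q\in\{0,1\}$, to $L(K_{n,n})$ for some $n\ge 1$, or to $L(G'\circ K_1)$ for some triangle-free graph $G'$. (In particular, no separate test of claw-freeness of $G$ is even needed, although one can also test it directly, e.g.\ in $O(n^4)$ time.) It therefore remains to decide, for a given connected true-twin-free graph $\Gamma$, whether $\Gamma$ lies in one of these three families.

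For the first family, compute $\overline{\Gamma}$ and check that it has maximum degree at most $1$ and at most one isolated vertex; this holds precisely when $\overline{\Gamma}\cong pK_2+qK_1$ with $q\in\{0,1\}$. For the other two families, observe that every graph of the form $L(K_{n,n})$ or $L(G'\circ K_1)$ is the line graph of a simple graph, so one runs a polynomial-time line-graph recognition algorithm (such as the classical ones of Roussopoulos or Lehot) to test whether $\Gamma\cong L(H)$ for some simple graph $H$ and, if so, to produce such an $H$; we may take $H$ connected, and by Whitney's theorem this connected root is unique unless $\Gamma\cong K_3$, a case that is easily disposed of directly (and in fact $K_3$ is not true-twin-free, so it never arises as some $\widetilde{C}$). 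Given $H$, testing $H\cong K_{n,n}$ reduces to checking that $H$ is a balanced complete bipartite graph, which is immediate. Testing whether $H\cong G'\circ K_1$ for some triangle-free $G'$ reduces to letting $P$ be the set of degree-one vertices of $H$ and verifying that $P$ is independent, that every vertex of $H-P$ has exactly one neighbor in $P$, that $|P|=|V(H-P)|$, and that $H-P$ is triangle-free; one checks easily that these conditions hold exactly when $H$ is the corona of a triangle-free graph. Accepting $G$ precisely when every $\widetilde{C}$ passes one of these tests yields the claimed algorithm, and all steps above run in polynomial time.

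The conceptual content of the corollary is already carried by Theorem~\ref{thm:claw-free-CIS} and Corollary~\ref{cor:claw-free-CIS}; what remains is just the assembly of standard subroutines and routine structural checks, so there is no serious obstacle. The only points of mild care are (i) reconstructing the root graph of a line graph, where one invokes Whitney's theorem and handles the tiny exceptional case ($K_3$, with its two roots), and (ii) the harmless overlaps between the three families on small graphs --- for instance $C_4$ is both $\overline{2K_2}$ and $L(K_{2,2})$, and $K_1$ lies in all three; since the algorithm only needs to know whether \emph{some} family contains $\widetilde{C}$, these overlaps cause no difficulty.
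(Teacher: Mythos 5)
Your proposal is correct and follows essentially the same route as the paper's proof: reduce to true-twin reductions of components via Corollary~\ref{cor:claw-free-CIS}, test the first family by a degree condition, and test the other two by reconstructing the line-graph root $H$ (Lehot/Roussopoulos, with Whitney's uniqueness) and then checking $H\cong K_{n,n}$ or $H\cong G'\circ K_1$ with $G'$ triangle-free. Your explicit handling of the small exceptional cases ($K_3$, overlaps such as $C_4$ and $K_1$) is a minor refinement of the same argument, not a different approach.
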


\begin{proof}
Let $\mathcal{C}$ be the class of claw-free CIS graphs and let $G$ be a graph that we want to test for membership in $\mathcal{C}$. Since the components of $G$ can be computed in linear time and $G\in \mathcal{C}$ if and only if each component of $G$ is in $\mathcal{C}$, we may assume that $G$ is connected.
Furthermore, since the true-twin reduction of $\mathcal{C}$ can be computed in polynomial time
and $G\in \mathcal{C}$ if and only it its true-twin reduction is in $\mathcal{C}$, we may assume that $G$ is true-twin-free.
By Corollary~\ref{cor:claw-free-CIS}, it suffices to verify whether
(i) $G\cong \overline{pK_2+qK_1}$ for some $p\ge 0$ and $q\in \{0,1\}$, (ii) $G\cong L(K_{n,n})$ for some $n\ge 1$, or (iii) $G\cong L(G'\circ K_1)$ for some triangle-free graph $G'$. Determining whether (i) holds can be done in linear time simply by computing the vertex degrees, since an $n$-vertex graph $G$ is isomorphic to $\overline{pK_2+qK_1}$ with $n = 2p+q$ if and only if $G$ has $q$ vertices of degree $|V(G)|-1$ and $n-q$ vertices of degree $|V(G)|-2$. To determine whether at least one of conditions (ii) or (iii) holds (and if so, which one), we first compute a graph $H$ such that $G= L(H)$ or determine that there is no such graph in linear time~\cite{MR0347690,MR0424435}. If such a graph $H$ exists, it is unique if $G$ is connected graph of order at least four (which we can assume)~\cite{MR1506881}. Testing if $H \cong K_{n,n}$ in polynomial time is straightforward.
Finally, it remains to test if $H\cong H'\circ K_1$ for some triangle-free graph $H'$. A graph $H'$ such that $H\cong H'\circ K_1$, if there is one, can be computed in linear time by identifying the leaves in $H$ and their neighbors. Since $G$ is connected and of order at least four, $H$ is connected and with at least four edges. Therefore, graph $H'$, if it exists, is unique. It remains to test if $H'$ is triangle-free, which, clearly, is doable in polynomial time.
\end{proof}

\section{Bounding the order of CIS graphs by the product of stability and clique numbers}\label{sec:bounds}

\subsection{Answering a question of Dobson et.~al}

We answer the following question of Dobson et al.~\cite{MR3278773} in the negative.

\begin{question}\label{q:CIS-1}
Does every CIS graph $G$ satisfy $|V(G)|\le \alpha(G)\cdot \omega(G)$?
\end{question}

In fact, as we show in the next theorem, the order of CIS graphs cannot even be bounded from above by any linear function of the product $\alpha(G)\omega(G)$.

\begin{theorem}\label{thm:counterexamples}
For every positive integer $k$ there exists a CIS graph $G_k$ such that
$ |V(G_k)|>k\cdot \alpha(G_k)\cdot \omega(G_k)$.
\end{theorem}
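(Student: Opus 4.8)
The plan is to construct $G_k$ explicitly from a triangle-free graph with small independence number, taking the line graph of a suitable leaf extension so that Corollary~\ref{cor:line-triangle-free-CIS-ii} (or Theorem~\ref{thm:claw-free-CIS}) guarantees the CIS property, while the parameters $\alpha$, $\omega$ of the line graph can be controlled and forced to be small relative to the number of vertices. Concretely, let $F$ be a triangle-free graph on $N$ vertices with $\alpha(F)\le c\sqrt{N\log N}$ — such graphs exist by Kim's construction of $R(3,t)$ graphs \cite{MR1369063}, as cited in the introduction — and with all degrees bounded by $O(\sqrt{N\log N})$. Consider $H = F\circ K_1$, the corona of $F$, which is triangle-free, has no isolated vertices, and is randomly internally matchable (it is a leaf extension of $F$, by Theorem~\ref{thm:randomly-internally-matchable}). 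Set $G_k = L(H)$. By Corollary~\ref{cor:line-triangle-free-CIS-ii}, $G_k$ is CIS.

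Next I would estimate the three quantities. The graph $H$ has $2N$ vertices and $|E(F)| + N$ edges, so $|V(G_k)| = |E(H)| = |E(F)| + N$. Since cliques in a line graph $L(H)$ correspond (for $H$ triangle-free) to stars in $H$, we get $\omega(G_k) = \Delta(H) = \Delta(F)+1 = O(\sqrt{N\log N})$. For the stability number: a stable set in $L(H)$ is a matching in $H$, so $\alpha(G_k) = \nu(H)$, the matching number of $H$; since $H$ has $2N$ vertices this is at most $N$, and in fact $H = F\circ K_1$ has a perfect matching, so $\alpha(G_k) = N$. Thus $\alpha(G_k)\cdot\omega(G_k) = O(N^{3/2}\sqrt{\log N})$, whereas $|V(G_k)| = |E(F)| + N$. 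The issue is that if $F$ is sparse then $|E(F)|$ is also only $O(N^{3/2}\sqrt{\log N})$, which does not beat the product. So a naive corona of a Kim graph is not enough, and this is the main obstacle: one must decouple the growth of $|V(G_k)|$ from that of $\alpha\cdot\omega$.

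To overcome this I would not put a single pendant edge on each vertex of $F$, but rather attach a large pendant \emph{set} of leaves — still a leaf extension, hence still randomly internally matchable by Theorem~\ref{thm:randomly-internally-matchable}. Let $H$ be obtained from $F$ by adding to each vertex $v$ of $F$ a set $L_v$ of $t$ new leaves, where $t$ is a parameter to be chosen. Then $H$ is triangle-free, has no isolated vertex, is a leaf extension of $F$, and $G_k := L(H)$ is CIS. Now $|E(H)| = |E(F)| + tN$, so $|V(G_k)| = |E(F)| + tN$, which we can make as large as we like by increasing $t$. Moreover $\Delta(H) = \max_v (d_F(v) + t)$, so choosing $t = \Theta(\Delta(F)) = \Theta(\sqrt{N\log N})$ keeps $\omega(G_k) = O(\sqrt{N\log N})$. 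For the matching number, $\nu(H)$: a maximal matching saturates all internal vertices, of which there are at most $2N$ (the $N$ vertices of $F$ together with, for each, at most... — actually the internal vertices are exactly the $N$ vertices of $F$ when $t\ge 1$, since leaves have degree $1$), hence $\nu(H)\le$ some bound linear in $N$; more carefully, any matching uses at most one edge per vertex of $F$ at the ``$F$-side,'' and the edges into $L_v$ compete with edges of $F$, giving $\nu(H) = O(N)$. Therefore $\alpha(G_k)\cdot\omega(G_k) = O(N^{3/2}\sqrt{\log N})$ while $|V(G_k)| = \Theta(tN) = \Theta(N^{3/2}\sqrt{\log N})$ for that choice of $t$ — still only matching the product up to constants.

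The remaining gap is closed by pushing $t$ further and re-optimizing, or better, by iterating the construction / using a denser triangle-free graph. The cleanest route: take $F$ to be a triangle-free graph on $N$ vertices with $\alpha(F) \le c\sqrt{N\log N}$, and attach $t$ leaves per vertex with $t = N^{a}$ for a suitable exponent $a$; then $|V(G_k)| = \Theta(N^{1+a})$, $\omega(G_k) = \Theta(N^a)$ (provided $N^a\gg\Delta(F)$), and $\alpha(G_k) = \Theta(N)$ (the matching number is dominated by the $\le N$ edges incident to $F$-vertices plus leftover leaves — in fact $\nu(H) = N$ once $t\ge 1$). Hence $\alpha(G_k)\omega(G_k) = \Theta(N^{1+a})$ again — the same order. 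The genuine fix is that $\alpha(G_k) = \nu(H)$ should be made \emph{sublinear} in $|V(G_k)|$ in a stronger way, which forces us to use the small-independence-number graph $F$ on the \emph{stable-set side} rather than the leaves: intersperse the roles. I expect the paper instead builds $G_k$ as the line graph of the corona (or a leaf extension) of a Kim graph $F_N$ and argues $\alpha(L(H)) = \nu(H)$, $\omega(L(H)) = \Delta(H)+1$, but crucially bounds $\nu(H)$ using that a large matching would contain a large matching within $F_N$ or a large set of leaf-edges at distinct vertices — either way bounded by $|V(F_N)|$ — and then notes that it is $|V(G_k)|/|V(F_N)| = t$, the leaf multiplicity, that is unbounded while $\alpha\omega$ stays $O(|V(F_N)|\cdot t) $... so one more idea is needed. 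That missing idea — likely attaching leaves only to an independent set, or taking disjoint unions with true twins removed, so that $\alpha$ and $\omega$ both stay $O(\sqrt{N\log N})$ while $|V|=\Theta(N)$ — is the crux, and verifying the CIS property of that final gadget via Theorem~\ref{thm:line-CIS} (checking the absorbing-matching condition and bull-freeness directly, since the graph need no longer be triangle-free) is where I expect the real work to lie.

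$ $

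Let me restate the skeleton cleanly, since the above narrative exposes the difficulty rather than a finished argument:

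\begin{enumerate}
\item Fix a triangle-free graph $F = F_N$ on $N$ vertices with $\alpha(F) = O(\sqrt{N\log N})$ (Kim~\cite{MR1369063}).
\item Build $H = H_N$ from $F$ by a leaf extension, attaching to each vertex a set of $t = t(N)$ pendant leaves; $H$ is triangle-free, has no isolated vertices, and is randomly internally matchable (Theorem~\ref{thm:randomly-internally-matchable}).
\item Set $G_k = L(H)$; it is CIS by Corollary~\ref{cor:line-triangle-free-CIS-ii}.
\item Compute: $|V(G_k)| = |E(H)| = |E(F)| + tN$; $\omega(G_k) = \Delta(H) + 0 = \Delta(F)+t$ (stars in triangle-free $H$); $\alpha(G_k) = \nu(H) \le N + $ (number of leaf-edges usable at unsaturated $F$-vertices), which is $O(N)$.
\item Choose the parameters ($N$ large, $t$ a fixed polynomial in $\sqrt{N\log N}$, and if necessary combine several copies or restrict leaves to a stable set) so that $|V(G_k)| / (\alpha(G_k)\omega(G_k)) \to \infty$.
\end{enumerate}

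The main obstacle is Step~5: making $|V(G_k)|$ strictly superlinear in $\alpha(G_k)\cdot\omega(G_k)$. A plain corona or uniform leaf extension over a triangle-free graph yields only $|V| = \Theta(\alpha\omega)$, so the construction must break the symmetry between the two sides — e.g. attach the many leaves only to a small dominating/independent portion of $F$, or take a line graph of a graph that is triangle-free only away from a controlled region — and then re-verify the CIS property, if triangle-freeness is lost, via the absorbing-matching criterion of Theorem~\ref{thm:line-CIS} together with bull-freeness.
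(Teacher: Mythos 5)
There is a genuine gap, and in fact the route you chose cannot be repaired: you confine yourself to line graphs of triangle-free graphs (leaf extensions/coronas of a Kim graph), but every such CIS graph is claw-free, and the paper's own Lemma~\ref{lem:line-graphs-of-multigraphs} and Theorem~\ref{thm:claw-free-CIS-alpha-omega} show that every claw-free CIS graph satisfies $|V(G)|\le \alpha(G)\cdot\omega(G)$. This is exactly why each of your parameter choices keeps landing on $|V|=\Theta(\alpha\omega)$ at best; no tuning of $t$, no restriction of the leaves to a stable set, and no re-verification via the absorbing-matching criterion can push $|V|$ past the product within that family. Your own Step~5 acknowledges the obstacle but the sketch never produces the idea that overcomes it, and your guess about what the paper does (``line graph of the corona of a Kim graph'') is not what happens.

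The missing idea is to leave line graphs entirely and use the fact that the CIS property is closed under \emph{substitution}. The paper starts from a Kim graph $H_n$ (triangle-free, $\alpha(H_n)\le 9\sqrt{n\log n}$), glues a new vertex $v^e$ onto each edge $e$ to form $H'$, whose maximal cliques are then exactly the triangles $\{u,v,v^e\}$ and hence all simplicial, so $H'$ is CIS; then it substitutes the graph $pK_p$ (with $p\approx 6kn$) for each original vertex of $H_n$. The gadget $pK_p$ is the crux you were missing: it has $p^2$ vertices but $\alpha(pK_p)=\omega(pK_p)=p$, so substitution multiplies the vertex count by $p^2$ while contributing only a factor $p$ each to the clique side ($\omega(G_k)=2p+1$) and to the weighted stability computation ($\alpha(G_k)<9p\sqrt{n\log n}+n^2$). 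Combined with the smallness of $\alpha(H_n)$, this makes $|V(G_k)|\approx p^2n$ exceed $k\cdot\alpha(G_k)\cdot\omega(G_k)$ for a suitable choice of $n$ and $p$. Without some such ``dense but balanced'' substitution gadget (or an equivalent mechanism outside the claw-free world), the construction you outline cannot reach the stated superlinear separation.
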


\begin{proof}
Kim proved in~\cite{MR1369063} that there exists a positive integer $n_0$ such that for all $n\ge n_0$, there exists an \hbox{$n$-vertex} triangle-free graph $H_n$ such that $\alpha(H_n) \le 9\sqrt{n\log n}$. We may assume that $H_n$ has no isolated vertices, since otherwise we can add, as long as necessary, for each isolated vertex $v$, an edge joining $v$ with some other vertex. Note that modifying $H_n$ this way does not create any triangles, it does not change the number of vertices, and it does not increase the stability number.

For a positive integer $k$, let $n_k$ be the smallest positive integer such that $n_k\ge n_0$ and $n_k\ge 54k\sqrt{n_k\log{n_k}}$.
Let $G_k$ be the graph obtained from $H_{n_k}$ by the following two-step procedure:
\begin{enumerate}
  \item First, construct a graph $H'_k$ by gluing a triangle along each edge of $H_{n_k}$. Formally, $V(H'_k) = V(H_{n_k})\cup\{v^e: e\in E(H_{n_k})\}$ and $E(H'_k) = E(H_{n_k})\cup\{uv^e: u$ is an endpoint of $e$ in $H_{n_k}\}$.
  \item Second, let $p = 6kn_k$ and for each vertex $v\in V(H_{n_k})$ in the graph $H'_k$, substitute $pK_p$ (the disjoint union of $p$ copies of $K_p$) for $v$. Call the resulting graph $G_k$.
\end{enumerate}

Since $H_{n_k}$ is triangle-free and without isolated vertices, its maximal cliques are its edges. It follows that the maximal cliques of $H'_k$ are the triangles consisting of the two endpoints of an edge $e$ of $H_{n_k}$ together with the new vertex $v^e$ associated with that edge.
In particular, every maximal clique of $H'_k$ is simplicial, and hence the graph $H'_k$ is CIS.

Clearly, $pK_p$ is a CIS graph. Since the class of CIS graphs is closed under substitution~\cite{AndBorGur}, we infer that the graph $G_k$ is also CIS. Its clique and stability numbers can be estimated as follows:
\begin{itemize}
  \item $\omega(G_k) =2p+1\le 3p$.

    A clique in $G_k$ of size $2p+1$ can be obtained by choosing any edge $e$ of $H_{n_k}$, taking two cliques of size $p$, one from each copy of $pK_p$ replacing an endpoint of $e$ in $G_k$, and vertex $v^e$. It is not difficult to see that there are no larger cliques in $G_k$.

  \item $\alpha(G_k) < 9p\sqrt{n_k\log  n_k} + n_k^2$.

  The stability number of the graph obtained from a graph $F$ by substituting a graph $H_v$ into every vertex $v$ of $F$ equals the maximum total weight of a stable set in the graph $F$ in which each vertex $v\in V(F)$ has weight equal to the stability number of $H_v$ (see, e.g.,~\cite{MR2463423}). Therefore, the stability number of $G_k$ equals the maximum total weight of a stable set in the graph $H'_k$ in which each vertex $v\in V(H_{n_k})$ has weight $p$ and all other vertices have unit weight. Let $S$ be a corresponding maximum-weight stable set of $H'_k$. Writing $S = S_p\cup S_1$ where $S_p = S\cap V(H_{n_k})$ and $S_1 = S\setminus V(H_{n_k})$, we see that the total weight of $S_p$ is at most $p\cdot \alpha(H_{n_k})$, while the total weight of $S_1$ is at most $|V(H'_k)\setminus V(H_{n_k})|= |E(H_{n_k})|$. It follows that
  $\alpha(G_k)\le p\cdot \alpha(H_{n_k})+|E(H_{n_k})|<9p \sqrt{n_k\log  n_k} + n_k^2\,,$ as claimed.
  \end{itemize}
Consequently, we have
$k\cdot\alpha(G_k)\cdot\omega(G_k) < k\cdot(9p\cdot \sqrt{n_k\log  n_k} + n_k^2)\cdot 3p = p\cdot \big(27k\sqrt{n_k\log  n_k}\cdot p+3kn_k^2\big)
\le p\cdot \left(pn_k/2+pn_k/2\right)= p^2n_k< |V(G_k)|$,
where the second inequality follows from
$n_k\ge 54k\sqrt{n_k\log{n_k}}$ and $p = 6kn_k$, and the last one from
$|V(G_k)| = p^2n_k + |E(H_{n_k})|$.
This completes the proof.
\end{proof}

\subsection{A positive answer for claw-free graphs}

In order to show that Question~\ref{q:CIS-1} has a positive answer for claw-free CIS graphs, we first show a property of weighted randomly internally matchable graphs, which follows easily from the characterization of randomly internally matchable graphs given by Theorem~\ref{thm:randomly-internally-matchable}.
For this we need some definitions.

Given a weighted graph $(H,w)$ and a set $X\subseteq E(H)$, we denote by $w(X)$ the total weight of edges in $X$, that is, $w(X) = \sum_{e\in X}w(e)$. Given a vertex $v\in V$, we denote by $E(v)$ the set of all edges having $v$ as endpoint, and define its \emph{weighted degree} as $d_w(v) = \sum_{e\in E(v)}w(e)$. We denote by $\Delta_w(H)$ the maximum weighted degree of a vertex in $H$. The maximum size of a matching in a graph $H$ is its \emph{matching number}, denoted by $\nu(H)$. A \emph{maximum matching} in $H$ is a matching of a size $\nu(H)$.

\begin{lemma}\label{lem:randomly-internally-matchable-weights}
Let $(H,w)$ be a weighted graph such that $H$ is a connected randomly internally matchable graph. Then $w(E(H))\le \Delta_w(H)\cdot \nu(H)\,.$
\end{lemma}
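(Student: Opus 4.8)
The plan is to invoke the structural characterization of Theorem~\ref{thm:randomly-internally-matchable}: the connected randomly internally matchable graph $H$ is isomorphic to $K_{2n}$ for some $n\ge 1$, to $K_{n,n}$ for some $n\ge 1$, or is a leaf extension of some graph $G'$. In each case I will exhibit enough combinatorial structure in $H$ to bound $w(E(H))$ by $\Delta_w(H)\cdot\nu(H)$ via a short double-counting argument, treating the first two cases together (using a perfect matching) and the third case via a small vertex cover.

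The two elementary tools are as follows. First, if $H$ has a perfect matching then $\nu(H)=|V(H)|/2$, and since $\sum_{v\in V(H)} d_w(v)$ counts every edge exactly twice we get $w(E(H)) = \frac12\sum_{v\in V(H)} d_w(v) \le \frac12\,|V(H)|\,\Delta_w(H) = \nu(H)\,\Delta_w(H)$. Second, if $U\subseteq V(H)$ is a vertex cover of $H$, then (since all weights are positive and every edge meets $U$) $w(E(H)) \le \sum_{v\in U} d_w(v) \le |U|\,\Delta_w(H)$; combined with the trivial inequality $\nu(H)\le|U|$ (distinct matching edges can be assigned distinct endpoints in $U$), this yields the desired bound whenever $|U| = \nu(H)$.

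Now the three cases. If $H\cong K_{2n}$ or $H\cong K_{n,n}$, then $H$ has a perfect matching and the first tool applies directly. If $H$ is a leaf extension of a graph $G'$, then $V(G')$ is a vertex cover of $H$, since every edge of $H$ is either an edge of $G'$ or joins a vertex of $G'$ to one of its new leaves; hence $\nu(H)\le|V(G')|$. Conversely, picking for each $v\in V(G')$ some leaf in the non-empty set $L_v$ together with the edge joining it to $v$ produces a matching of size $|V(G')|$, so in fact $\nu(H)=|V(G')|$ and the second tool applies with $U=V(G')$.

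There is no real obstacle here once Theorem~\ref{thm:randomly-internally-matchable} is in hand; the only minor subtlety is that the ``vertex cover of size $\nu(H)$'' argument does not work for $K_{2n}$ --- whose minimum vertex cover has size $2n-1$ --- so the complete-graph case really does need to be handled through the perfect-matching identity rather than uniformly with the leaf-extension case.
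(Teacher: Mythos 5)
Your proposal is correct and follows essentially the same route as the paper: both apply the characterization of Theorem~\ref{thm:randomly-internally-matchable} and then bound $w(E(H))$ by summing weighted degrees over a suitable vertex set (all of $V(H)$ for $K_{2n}$, one side or a half-sum for $K_{n,n}$, and $V(G')$ for a leaf extension), together with the observation that $\nu(H)$ equals $n$ in each case. Your explicit verification that $\nu(H)=|V(G')|$ via the leaf edges is a nice touch that the paper leaves as ``not difficult to see,'' but it is not a different argument.
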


\begin{proof}
By Theorem~\ref{thm:randomly-internally-matchable}, $H$ satisfies one of the following conditions: (i) $H\cong K_{2n}$ for some $n\ge 1$, (ii) $H\cong K_{n,n}$ for some $n\ge 1$, or (iii) $H$ is a leaf extension of some $n$-vertex graph $H'$. It is not difficult to see that in either case, the matching number of $H$ equals $n$. Hence, we want to show the inequality $w(E(H))\le n\cdot \Delta_w(H)$. If $H\cong K_{2n}$, then $2w(E(H))= \sum_{x\in V(H)}w(E(x)) \le 2n\cdot \Delta_w(H)$. If $H\cong K_{n,n}$, with a bipartition $\{X,Y\}$, then $w(E(H)) = \sum_{x\in X}w(E(x))\le n\cdot \Delta_w(H)$. Finally, if $H$ is a leaf extension of some $n$-vertex graph  $H'$, then $w(E(H)) \le \sum_{x\in V(H')}w(E(x))\le n\cdot \Delta_w(H)$.
Thus, in either case, the desired inequality holds.
\end{proof}

From Lemma~\ref{lem:randomly-internally-matchable-weights} we derive one more intermediate lemma.

\begin{lemma}\label{lem:line-graphs-of-multigraphs}
Let $H$ be a connected triangle-free multigraph and let
$G = L(H)$. If $G$ is CIS, then $|V(G)|\le \alpha(G)\cdot \omega(G)$.
\end{lemma}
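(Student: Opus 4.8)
The plan is to reduce the statement to Lemma~\ref{lem:randomly-internally-matchable-weights} by passing from the multigraph $H$ to its underlying simple graph equipped with the edge-multiplicity function. Write $H$ as a weighted graph $(\tilde H,w)$, where $\tilde H$ keeps one representative from each parallel class and $w$ records the multiplicities. Since $G=L(H)$ is non-null, $H$ has at least one edge, and being connected it then has no isolated vertices; hence $\tilde H$ is connected, triangle-free (a triangle of $\tilde H$ would be a triangle of $H$), and has no isolated vertices. First I would record how the three quantities in the inequality translate. The vertex set of $G$ is $E(H)$, so $|V(G)|=w(E(\tilde H))$. A stable set of $L(H)$ is a matching of $H$, and a matching cannot use two parallel edges, so $\alpha(G)=\nu(\tilde H)$. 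Finally, because $H$ is triangle-free, a set of edges of $H$ that pairwise share an endpoint must all be incident to a common vertex (three edges with pairwise distinct endpoint pairs that pairwise intersect would form a triangle), so every clique of $L(H)$ is contained in some $E(v)$ and therefore $\omega(G)=\Delta_w(\tilde H)$. Under these identifications, the desired inequality $|V(G)|\le\alpha(G)\,\omega(G)$ is precisely $w(E(\tilde H))\le\nu(\tilde H)\cdot\Delta_w(\tilde H)$, which is the conclusion of Lemma~\ref{lem:randomly-internally-matchable-weights} applied to $(\tilde H,w)$ — provided we can show that $\tilde H$ is randomly internally matchable.

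Establishing this is the heart of the argument. Observe that $L(H)$ is obtained from $L(\tilde H)$ by substituting a complete graph $K_{w(e)}$ into each vertex $e$, i.e., $L(H)\cong L(\tilde H)[K_{w(e)}:e\in E(\tilde H)]$, since the $w(e)$ parallel copies of $e$ form a clique of $L(H)$ and have the same neighbours outside it. I would then show that replacing the vertices of an arbitrary graph $F$ by true-twin cliques does not change its true-twin reduction: the copies of a single vertex are pairwise true twins, and two copies of distinct vertices $u,v$ are true twins in $F[K_{m_v}:v]$ if and only if $N_F[u]=N_F[v]$; hence the true-twin classes of $F[K_{m_v}:v]$ are exactly the unions of copy-sets over the true-twin classes of $F$, and the two true-twin reductions coincide. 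Applying this with $F=L(\tilde H)$ shows that $L(H)$ and $L(\tilde H)$ have the same true-twin reduction, and since both graphs are connected, Lemma~\ref{lem:components-CIS} gives that $L(\tilde H)$ is CIS. As $\tilde H$ is triangle-free with no isolated vertices, Corollary~\ref{cor:line-triangle-free-CIS-ii} then yields that $\tilde H$ is randomly internally matchable, which is what remained to be shown.

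The only genuinely non-routine point is the passage from ``$L(H)$ is CIS'' to ``$L(\tilde H)$ is CIS'': closure of the CIS property under substitution (Andrade et al.) gives the implication for free in the wrong direction, so I would argue through the true-twin reduction and Lemma~\ref{lem:components-CIS} as above rather than through substitution directly. Everything else — the three bookkeeping identities for $|V(G)|$, $\alpha(G)$, and $\omega(G)$, and the final appeal to Lemma~\ref{lem:randomly-internally-matchable-weights} — is routine.
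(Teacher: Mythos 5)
Your proposal is correct and follows essentially the same route as the paper: pass to the underlying simple graph with the multiplicity weight function, identify $|V(G)|$, $\alpha(G)$, $\omega(G)$ with $w(E(\tilde H))$, $\nu(\tilde H)$, $\Delta_w(\tilde H)$, deduce via Corollary~\ref{cor:line-triangle-free-CIS-ii} that $\tilde H$ is randomly internally matchable, and conclude with Lemma~\ref{lem:randomly-internally-matchable-weights}. The only (harmless) difference is that where the paper simply cites the known fact that the CIS property is invariant under substituting cliques for vertices, you re-derive this transfer from $L(H)$ to $L(\tilde H)$ explicitly through the true-twin reduction and Lemma~\ref{lem:components-CIS}, which is a valid self-contained substitute.
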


\begin{proof}
Let $(H,w)$ be a weighted graph such that $H$ is a connected triangle-free graph, and assume that $G = L(H,w)$ is CIS (cf.~Section~\ref{subsec:prelim-line}).
Let $G' = L(H)$ be the usual line graph of $H$. It is not difficult to see that $G$ can be obtained from $G'$ by substituting, for each vertex $v \in V(G')$, a clique of size $w(e_v)$ where $e_v$ is the edge of $H$ corresponding to $v$. Since $G$ is CIS if and only if $G'$ is CIS (see, e.g.,~\cite{MR3141630}), we infer that $G'$ is CIS. By Corollary~\ref{cor:line-triangle-free-CIS-ii}, $H$ is randomly internally matchable. Since $H$ is connected, Lemma~\ref{lem:randomly-internally-matchable-weights} implies that
$w(E(H))\le \Delta_w(H)\cdot \nu(H)$.

 Clearly, the stability number of $G$ is given as $\alpha(G) = \alpha(L(H,w)) = \alpha(L(H)) = \nu(H)$.
Moreover, the clique number of $G$ is the maximum weight of a clique in $L(H)$ with respect to the weight function assigning to each vertex $v\in L(H)$ weight $w(e_v)$, where $e_v$ is the edge of $H$ corresponding to $v$. Since $H$ is triangle-free, every clique in $L(H)$ corresponds to a set of edges in $H$ with a fixed common endpoint. It follows that the clique number of $G$ equals the maximum weighted degree $\Delta_w(H)$. Since also $|V(G)| = w(E(H))$, the inequality $w(E(H))\le \Delta_w(H)\cdot \nu(H)$ implies that $|V(G)|\le \alpha(G)\cdot \omega(G)$, as claimed.
\end{proof}

\begin{theorem}\label{thm:claw-free-CIS-alpha-omega}
If $G$ is a CIS claw-free graph, then $|V(G)|\le \alpha(G)\cdot \omega(G)$.
\end{theorem}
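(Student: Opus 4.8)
The plan is to reduce to connected graphs and then split according to whether $G$ is co-connected. First I would observe that every component of a CIS graph is again CIS (a maximal clique of $G$ lies inside a single component and is maximal there, while a maximal stable set of a component extends to a maximal stable set of $G$), and of course every component of a claw-free graph is claw-free. Since $|V|$ and $\alpha$ are additive over components while $\omega$ is the maximum of the $\omega(G_i)$, the bound for connected graphs gives $|V(G)| = \sum_i |V(G_i)| \le \sum_i \alpha(G_i)\omega(G_i) \le \omega(G)\sum_i \alpha(G_i) = \alpha(G)\omega(G)$, so from now on I may assume $G$ is connected. If $G$ is moreover co-connected, then Corollary~\ref{cor:claw-free-CIS-line} provides a connected triangle-free multigraph $H$ with $G = L(H)$, and since $G$ is CIS, Lemma~\ref{lem:line-graphs-of-multigraphs} yields $|V(G)| \le \alpha(G)\omega(G)$ immediately.

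It remains to handle the case where $G$ is connected but $\overline{G}$ is disconnected, so that $G$ is the join of its co-components $G_1,\dots,G_k$ with $k \ge 2$ (that is, $G$ is obtained from the disjoint union $G_1 + \dots + G_k$ by adding all edges between distinct parts). By Lemma~\ref{lem:claw-free-CIS-gem-free}, $G$ is gem-free. Using that every vertex outside a given co-component $G_i$ is adjacent to all of $V(G_i)$, a stable set of size $3$ inside some $G_i$ together with such an outside vertex would induce a claw, and an induced $P_4$ inside some $G_i$ together with such an outside vertex would induce a gem; hence $\alpha(G_i) \le 2$ and $G_i$ is $P_4$-free for every $i$. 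Consequently $\overline{G_i}$ is $P_4$-free ($P_4$-freeness being preserved under complementation) and triangle-free (since $\omega(\overline{G_i}) = \alpha(G_i) \le 2$), i.e., a triangle-free cograph. Now a triangle-free cograph $F$ satisfies $|V(F)| \le 2\alpha(F)$: by the recursive structure of cographs, a connected triangle-free cograph on at least two vertices is a complete bipartite graph $K_{s,t}$, for which $|V| = s + t \le 2\max\{s,t\} = 2\alpha$, and the inequality passes to disjoint unions (and is trivial for $K_1$). Applying this to $F = \overline{G_i}$ gives $|V(G_i)| = |V(\overline{G_i})| \le 2\alpha(\overline{G_i}) = 2\omega(G_i)$, so $|V(G)| = \sum_{i=1}^k |V(G_i)| \le 2\sum_{i=1}^k \omega(G_i) = 2\omega(G)$. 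Finally, if $\alpha(G) \ge 2$ this gives $|V(G)| \le 2\omega(G) \le \alpha(G)\omega(G)$; and if $\alpha(G) = 1$ then $G$ is complete and $|V(G)| = \omega(G) = \alpha(G)\omega(G)$.

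The genuinely delicate point is the co-disconnected case, and it is where the hypotheses are used non-trivially: a connected claw-free CIS graph can perfectly well sit on top of a triangle-free graph of minuscule independence number — this is exactly the phenomenon driving Theorem~\ref{thm:counterexamples} — so the desired bound cannot follow from triangle-freeness alone. What saves us is that in the co-disconnected situation claw-freeness alone forces each co-component to have independence number at most $2$, and then gem-freeness (implied by being CIS for claw-free graphs) forces each co-component to be $P_4$-free; together these two constraints determine the co-components completely as complements of triangle-free cographs, which are precisely ``thin'' enough for the counting to close. Everything else — the reduction to components and the elementary behaviour of $|V|$, $\alpha$, and $\omega$ under disjoint union and join — is routine bookkeeping.
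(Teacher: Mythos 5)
Your proof is correct, and its core coincides with the paper's: reduce to the connected case, and when $G$ is also co-connected invoke Corollary~\ref{cor:claw-free-CIS-line} to write $G=L(H)$ for a connected triangle-free multigraph $H$ and finish with Lemma~\ref{lem:line-graphs-of-multigraphs}. Where you diverge is the connected-but-not-co-connected case. The paper runs the whole argument as a minimal-counterexample induction: if $\overline{G}=H_1+H_2$, then $\overline{H_1},\overline{H_2}$ are smaller claw-free CIS graphs (using closure of CIS under complementation and under taking components), so the bound holds for them by minimality, and the join arithmetic $\sum_i\alpha(\overline{H_i})\omega(\overline{H_i})\le \omega(\overline{G})\alpha(\overline{G})=\alpha(G)\omega(G)$ closes the case without any structural analysis. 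You instead analyze the co-components directly: claw-freeness forces $\alpha(G_i)\le 2$, gem-freeness (Lemma~\ref{lem:claw-free-CIS-gem-free}, which the paper's proof of this theorem never needs to invoke) forces each $G_i$ to be $P_4$-free, so each $\overline{G_i}$ is a triangle-free cograph and $|V(G_i)|\le 2\omega(G_i)$, giving $|V(G)|\le 2\omega(G)$ in the join case, with the $\alpha(G)=1$ edge case handled separately. Your route is induction-free in that branch and yields the stronger conclusion $|V(G)|\le 2\omega(G)$ there (it essentially re-derives the co-component structure appearing in the non-co-connected case of the proof of Theorem~\ref{thm:claw-free-CIS}), at the cost of an extra lemma and a short cograph case analysis; the paper's induction is shorter but purely formal. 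Both arguments are sound.
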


\begin{proof}
Suppose for a contradiction that the theorem fails and let $G$ be a counterexample with the smallest possible number of vertices.
That is, $G$ is a claw-free CIS graph such that $|V(G)|>\alpha(G)\cdot \omega(G)$, but every smaller claw-free CIS graph $G'$ satisfies
$|V(G')|\le \alpha(G')\cdot \omega(G')$.

First we show that $G$ is connected. Suppose not, and let $G$ be the disjoint union of two  graphs $G_1$
and $G_2$. Then each of $G_1$ and $G_2$ is a smaller claw-free CIS graph,
 therefore \hbox{$|V(G_i)|\le \alpha(G_i)\cdot \omega(G_i)$} holds for $i\in \{1,2\}$ by the minimality of $G$.
We have $\omega(G) = \max\{\omega(G_1),\omega(G_2)\}$.
Without loss of generality we may assume that $\omega(G) = \omega(G_1)\ge \omega(G_2)$.
We thus have
$|V(G)|= |V(G_1)|+ |V(G_2)| \le  \alpha(G_1)\cdot \omega(G_1) + \alpha(G_2)\cdot \omega(G_2)
\le  (\alpha(G_1)+\alpha(G_2))\cdot \omega(G_1)
= \alpha(G)\cdot \omega(G)$.
Hence, $G$ is not a counterexample. This contradiction shows that $G$ is connected.

Next, we show that $G$ is co-connected. If not, then
 $\overline{G}$ is the disjoint union of two  smaller graphs $H_1$ and $H_2$.
  Since  $\overline{G}$ is CIS, we have that both $H_1$ and $H_2$ are CIS.
 Therefore $\overline{H_1}$ and $\overline{H_2}$  are CIS  claw-free graphs of smaller order than $G$, which implies
  $|V(\overline{H_i})|\le \alpha(\overline{H_i})\cdot \omega(\overline{H_i})$ for  $i\in \{1,2\}$.
Similar arguments as above show that $|V(G)|= |V(\overline{H_1})|+ |V(\overline{H_2})| \leq
\alpha(\overline{G})\cdot \omega(\overline{G})=\alpha(G)\cdot \omega(G)$, a contradiction. Thus, $G$ is co-connected.

Now, since $G$ is a connected and co-connected claw-free CIS graph, Corollary~\ref{cor:claw-free-CIS-line} implies that $G$ is the line graph of a connected triangle-free multigraph. By Lemma~\ref{lem:line-graphs-of-multigraphs}, we have
$|V(G)|\le \alpha(G)\cdot \omega(G)$, implying that $G$ is not a counterexample. This contradiction completes the proof.
\end{proof}

\section{An open question}\label{sec:EH}

Question~\ref{q:CIS-1} has been answered in the negative. The following relaxation of it is still open.

\begin{question}\label{q:CIS}
Is there an integer $k$ such that every
CIS graph $G$ satisfies $|V(G)|\le (\alpha(G)\cdot \omega(G))^k$?
\end{question}

A graph class $\mathcal{G}$ is said to satisfy the \emph{Erd\H{o}s-Hajnal property} if there exists some $\varepsilon>0$ such that $\max\{\alpha(G), \omega(G)\}\ge |V(G)|^\varepsilon$ holds for all graphs $G\in \mathcal{G}$. The well-known Erd\H{o}s-Hajnal Conjecture~\cite{MR1031262} asks whether for every graph $F$, the class of $F$-free graphs has the Erd\H{o}s-Hajnal property. The conjecture is still open (see~\cite{MR3150572} for a survey).
It is not difficult to see that, using this terminology, Question~\ref{q:CIS} can be equivalently phrased as follows.

\begin{question}\label{q:EH}
Does the class of CIS graphs have the Erd\H{o}s-Hajnal property?
\end{question}

Note, however, that it is possible that Question~\ref{q:EH} bears only superficial resemblance with the Erd\H{o}s-Hajnal Conjecture. This is because every graph is an induced subgraph of a CIS graph (see, e.g.,~\cite{AndBorGur}); furthermore, for a graph $F$, the class of $F$-free graphs is a subclass of the class of CIS graphs if and only if $F$ is an induced subgraph of $P_4$. It is thus in principle possible that
the Erd\H{o}s-Hajnal Conjecture is true, while Question~\ref{q:EH} has a negative answer, or vice versa.

\subsection*{Acknowledgement}

The authors are grateful to Nevena Piva\v{c} for helpful discussions and remarks. The work of the third author is supported in part by the Slovenian Research Agency (I0-0035, research program P1-0285, and research projects N1-0032, J1-7051, and J1-9110). Part of the work for this paper was done
in the framework of a bilateral project between Argentina and Slovenia, partially financed by the Slovenian Research Agency (BI-AR/$15$--$17$--$009$).

\end{document}